\newtheorem{theorem}{\uppercase{Theorem}}[section]
\newtheorem{corollary}{\uppercase{Corollary}}[section]
\newtheorem{conjecture}{\uppercase{Conjecture}}[section]
\newtheorem{observation}[theorem]{Observation}
\theoremstyle{definition}
\newtheorem{definition}{\uppercase{Definition}}[section]
\newtheorem{proposition}{\uppercase{Proposition}}[section]
\newtheorem{problem}{\uppercase{Problem}}[section]
  \newcommand\blfootnote[1]{%
  \begingroup
  \renewcommand\thefootnote{}\footnote{#1}%
  \addtocounter{footnote}{-1}%
  \endgroup
}
\title{\uppercase{Coloring Decompositions of Complete Geometric Graphs}}
\author{
Clemens Huemer$^{1}$
\and Dolores Lara$^{2}$
\and Christian Rubio-Montiel$^{3,4,5}$
}
\date{
$^1$Departament de Matem{\` a}tiques, Universitat Polit{\`e}cnica de Catalunya, Spain. e-mail: clemens.huemer@upc.edu \\
$^2$Departamento de Computaci{\' o}n, Centro de Investigaci{\' o}n y de Estudios Avanzados del Instituto Polit{\' e}cnico Nacional, Mexico. e-mail: dlara@cs.cinvestav.mx\\
$^3$ Divisi{\' o}n de Matem{\' a}ticas e Ingenier{\' i}a, FES Acatl{\' a}n, Universidad Nacional Aut{\' o}noma de M{\' e}xico. e-mail: christian.rubio@apolo.acatlan.unam.mx\\
$^4$UMI LAFMIA 3175 CNRS at CINVESTAV-IPN, Mexico.\\
$^5$Department of Algebra, Comenius University, Slovakia.\\ [2ex]%
\today{}
}
\begin{document}
\maketitle

\renewcommand\subsection{\@startsection{subsection}{3}{\z@}%
                                     {-3.25ex\@plus -1ex \@minus -.2ex}%
                                     {-1.5ex \@plus -.2ex}%
                                     {\normalfont\normalsize\bfseries}}
\makeatother

\blfootnote
{
\begin{minipage}[l]{0.3\textwidth} \includegraphics[trim=10cm 6cm 10cm 5cm,clip,scale=0.15]{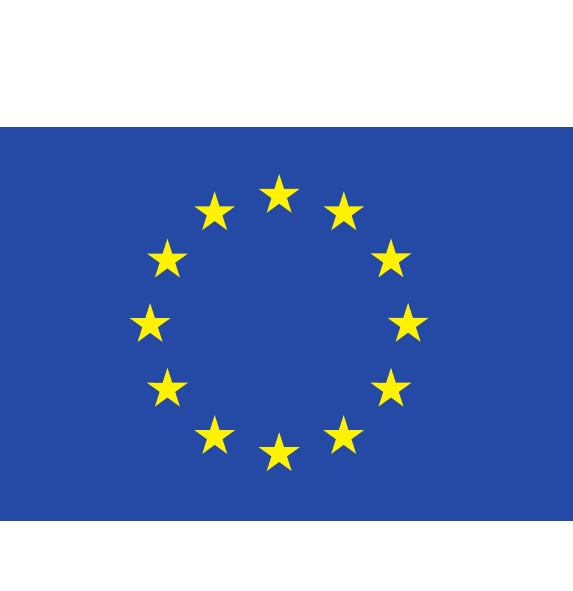} 
\end{minipage}
 \hspace{-2cm} 
 \begin{minipage}[l][1cm]{0.7\textwidth}
 	  This project has received funding from the European Union's Horizon 2020 research and innovation programme under the Marie Sk\l{}odowska-Curie grant agreement No 734922.
\end{minipage}}

\makeatletter

\begin{abstract}
A decomposition of a non-empty simple graph $G$ is a pair $[G,P]$, such that $P$ is a set of non-empty induced subgraphs of $G$, and every edge of $G$ belongs to exactly one subgraph in $P$. The chromatic index $\chi'([G,P])$ of a decomposition $[G,P]$ is the smallest number $k$ for which there exists a $k$-coloring of  the elements  of $P$ in such a way that: for every element of $P$ all of its edges have the same color, and if two members of $P$ share at least one vertex, then they have different colors.
A long standing conjecture of Erd\H{o}s-Faber-Lov\'asz states that every decomposition $[K_n,P]$ of the complete graph $K_n$ satisfies $\chi'([K_n,P])\leq n$. In this paper we work with geometric graphs, and inspired by this formulation of the conjecture, we introduce the concept of chromatic index of a decomposition of the complete geometric graph.  We present bounds for the chromatic index of several types of decompositions when the vertices of the graph are in general position. We also consider the particular case when the vertices are in convex position and present bounds for the chromatic index of a few types of decompositions.
\end{abstract}

\textbf{Keywords:} geometric graphs, coloring, geometric chromatic index.

%%%%%%%%%%%%%%%%%%%%%%%%%%%%%%%%%%%%%%%%%%%%%%%%%%%%%%%%%%%%%%%%%%%%%%%%%%%%%%%%%%%%%%%%%%%%%%%%%%%%%%%%%%%%

\section{Introduction}

In 1972, Paul Erd{\H o}s, Vance Faber and L{\' a}szl{\' o} Lov{\'a}sz \cite{MR0409246} made the following conjecture:

\begin{conjecture}
Let $|A_i|=n$, $1\leq i \leq n$, and suppose that $|A_i\cap A_j|\leq1$, for $1\leq i < j \leq n$, then one can color the elements of the union $\bigcup_{i=1}^{n}A_i$ by $n$ colors, so that every set contains elements of all colors.
\end{conjecture}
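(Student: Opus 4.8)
My plan is to translate the statement into hypergraph language and attack it as a graph-colouring problem. Let $H$ be the hypergraph with vertex set $V=\bigcup_{i=1}^{n}A_{i}$ and edges $A_{1},\dots,A_{n}$; the hypothesis $|A_{i}\cap A_{j}|\le 1$ says precisely that $H$ is \emph{linear}. Build the graph $G$ on $V$ by joining $x$ and $y$ whenever some $A_{i}$ contains both of them. A colouring of $V$ in which every $A_{i}$ receives all $n$ colours is exactly a proper colouring of $G$, and since each $A_{i}$ is a clique of size $n$, every proper colouring of $G$ uses at least $n$ colours; thus the conjecture is the assertion that $\chi(G)=n$. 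First I would record the elementary structure: a vertex lying in $k$ of the sets has exactly $k(n-1)$ neighbours in $G$, because any two of those $k$ sets meet only in that vertex, so a greedy colouring along a carefully chosen vertex order already settles the cases in which some vertex lies in few edges, or in which the edges admit a linear order in which each one meets the union of its predecessors in few points. These reductions let one assume that $G$ is ``uniformly dense'': almost every vertex lies in close to $n$ edges and the sets are pairwise almost disjoint.

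The next step is to pass to the fractional relaxation. The fractional chromatic number of $G$ equals $n$ — this is the fractional Erd{\H o}s--Faber--Lov{\'a}sz theorem of Kahn and Seymour — so the only obstruction is integrality, which points to the semi-random (nibble) method. Following Kahn's treatment of list colourings of linear hypergraphs, I would iterate: in each round pick a small random fraction of the still-uncoloured vertices, assign them colours creating no conflict inside any edge, and use Talagrand's inequality to show that the residual hypergraph stays essentially linear while its maximum degree and its colour-availability lists shrink in a controlled ratio; a final bounded leftover is coloured greedily. This already delivers a colouring with $n+o(n)$ colours, matching Kahn's bound for the conjecture.

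Closing the gap from $n+o(n)$ down to exactly $n$ is where I expect the real difficulty: nibble arguments intrinsically ``waste'' colours, and the clique constraints are too rigid to absorb that loss directly. The natural plan is to split by edge structure. In the regime where many of the $A_{i}$ are almost disjoint from the rest, I would set aside a small reservoir of colours and an absorbing substructure so that whatever the nibble leaves over on those edges can be completed exactly; in the complementary ``locally sparse'' regime I would need an exact nibble that provably uses no spare colour, which requires pinning the error terms down to lower order and choosing a weighting that forces each leftover component to be colourable from the palette already in use — here Haxell's condition for rainbow systems of distinct representatives, together with a Molloy--Reed style reserved-colour argument, looks like the right engine. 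I would organise everything around a potential function measuring, over all $i$, how far $A_{i}$ is from being rainbow, and try to drive it to zero without ever introducing colour $n+1$. I should be candid that a complete proof by these means is almost certainly beyond reach in a short argument; short of that, the realistic target is an explicit bound $\chi(G)\le cn$ with $c<3/2$, sharpening the $\lceil 3n/2\rceil-2$ estimate of Chang and Lawler.
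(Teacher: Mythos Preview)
The statement you are trying to prove is a \emph{conjecture}, not a theorem, and the paper offers no proof of it whatsoever. It is the Erd\H{o}s--Faber--Lov\'asz conjecture, quoted in the introduction purely as background and motivation; the paper then reformulates it as Conjecture~1.2 and moves on to study a geometric analogue. There is therefore nothing to compare your attempt against.

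As for the attempt itself: what you have written is not a proof but a survey of the known landscape around EFL --- the reduction to $\chi(G)=n$, the Kahn--Seymour fractional result, Kahn's nibble giving $n+o(n)$, the Chang--Lawler bound --- together with a wish list (``exact nibble'', ``absorbing substructure'', ``drive a potential to zero'') for closing the last gap. You explicitly concede that ``a complete proof by these means is almost certainly beyond reach in a short argument'' and then retreat to the goal of beating $3n/2$. That concession is accurate: none of the steps after the $n+o(n)$ bound is actually carried out, and the phrases ``exact nibble that provably uses no spare colour'' and ``pinning the error terms down to lower order'' name the difficulty rather than resolve it. So there is a genuine gap, and it is precisely the gap that made this a famous open problem: the semi-random method inherently loses a lower-order number of colours, and nothing in your outline supplies the missing mechanism to recover them.
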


This conjecture is called the Erd{\H o}s--Faber--Lov{\'a}sz conjecture (for short EFL-conjecture) which has been formulated in terms of decompositions of graphs \cite{2015arXiv150805532A, APVA2016}.

A \emph{decomposition} of a non-empty simple graph $G$ is a pair $[G,P]$, such that (1) $P$ is a set of non-empty induced subgraphs of $G$, and (2) every edge of $G$ belongs to exactly one subgraph in $P$. We can think of $P$ as being a partition of the edges of $G$. 

A $k$-$P$-\emph{coloring} (or a $k$-$P$-proper coloring) of a decomposition $[G,P]$ is a function that assigns to each edge of $G$ a color from a set of $k$ colors, so that (1) for every $H\in P$ all of its edges have the same color, and (2) for any two $H_1,H_2\in P$, if the two graphs have at least one vertex in common, then $E(H_1)$ and $E(H_2)$ have different colors. The \emph{chromatic index} $\chi'([G,P])$ of a decomposition $[G,P]$ is the smallest number $k$ for which there exists a $k$-$P$-coloring of $[G,P]$.

The authors of \cite{2015arXiv150805532A} and \cite{APVA2016}  give an equivalent formulation of the well-known EFL-conjecture in terms of the chromatic index of decompositions of the complete graph:

\begin{conjecture}\label{conjecture1.2}
Every decomposition $[K_n,P]$ of the complete graph $K_n$ satisfies  \[\chi'([K_n,P])\leq n.\]
\end{conjecture}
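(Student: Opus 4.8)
The plan is to pass to the equivalent set‑system formulation and then treat the statement as a graph‑coloring problem. Given a decomposition $[K_n,P]$, note that every $H\in P$ is a clique (an induced subgraph of $K_n$), and since distinct members are edge‑disjoint they pairwise share at most one vertex; moreover the members through a fixed vertex $v$ partition the $n-1$ edges of $K_n$ at $v$, so there are at most $n-1$ of them. Associating to each $H$ its vertex set $A_H=V(H)$, a $P$‑coloring is exactly a proper vertex coloring of the intersection graph $L$ on node set $P$ in which two members are adjacent iff their vertex sets meet. Thus it suffices to prove $\chi(L)\le n$, and $L$ has the extra structure that it is the union of the $n$ cliques $\{\,H\in P:\ v\in A_H\,\}$, one for each vertex $v$ of $K_n$, each of size at most $n-1$. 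This is precisely the classical hypergraph reformulation of the Erd\H{o}s–Faber–Lov\'asz conjecture.

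With $L$ in hand I would first record the easy greedy bound: ordering the members of $P$ by decreasing size and coloring them one at a time, when we reach a member of size $k$ every already‑colored member conflicting with it shares exactly one of its $\le k$ vertices and has size $\ge k$, which through each such vertex limits the count to roughly $(n-1)/(k-1)$; summing gives at most about $2n$ colors. The first substantive improvement would be an amortized version of this count — charging conflicts to vertex–pair incidences and treating small cliques separately — which yields the Chang–Lawler‑type bound $\chi'([K_n,P])\le\lceil 3n/2\rceil-2$. The second, and the step that gets the leading constant right, is a probabilistic argument in the spirit of Kahn: reserve a palette of $(1+\varepsilon)n$ colors, handle the $O(n)$ members of size $\ge\delta n$ by a direct greedy pass using their bounded local degree, and color the remaining ``small'' members by a Rödl‑nibble / semi‑random coloring, exploiting that on small members $L$ is locally sparse. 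This delivers $\chi'([K_n,P])\le n+o(n)$.

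The main obstacle — and the reason the statement is a conjecture rather than a theorem — is removing the last $o(n)$ and landing exactly at $n$. The extremal configurations (the near‑pencil, and for $n=q^2+q+1$ the point–line incidence structure of a projective plane of order $q$) make $L$ have clique number and chromatic number exactly $n$, so there is no slack whatsoever: a correct proof cannot rest on local degree counts alone and must be sensitive to the global design‑like structure. The route that does succeed, for all sufficiently large $n$, augments the nibble with an absorption step — setting aside in advance a small, carefully randomized reserve of colors and members flexible enough to accommodate the few members left uncolored after the nibble without creating a conflict — and the delicate heart of the matter is building an absorber that is simultaneously small and robust. For small $n$ one would fall back on ad hoc and computer‑assisted verification. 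I expect essentially all the difficulty to concentrate in designing and analyzing the absorber.
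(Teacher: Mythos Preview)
There is nothing to compare here: in the paper this statement is a \emph{conjecture}, not a theorem, and the paper offers no proof. What the paper does (citing \cite{2015arXiv150805532A,APVA2016}) is record that Conjecture~\ref{conjecture1.2} is an equivalent reformulation of the Erd\H{o}s--Faber--Lov\'asz conjecture; it then moves on to the geometric analogue, which is the actual subject of the paper. Your opening paragraph correctly reconstructs exactly this equivalence (members of $P$ are cliques, pairwise meeting in at most one vertex, at most $n-1$ through each vertex, and a $P$-coloring is a proper coloring of the intersection graph), so on that point you and the paper agree.

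Your write-up, however, is not a proof but a historically accurate roadmap: the greedy $2n$ bound, the Chang--Lawler $\lceil 3n/2\rceil-2$, Kahn's $n+o(n)$ via the nibble, and finally the absorption strategy that resolves the conjecture for all sufficiently large $n$. You explicitly flag that the hard content lives in the absorber and that small $n$ would need separate treatment. That is an honest assessment, but it means your proposal has the same gap the paper has --- the actual argument is not supplied --- and since the paper never claimed one, the right disposition is simply to note that Conjecture~\ref{conjecture1.2} is stated, not proved, and that your text should be read as context rather than as a proof attempt.
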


Note that for every decomposition $[K_n,P]$ of the complete graph $K_n$, since $P$ is a set of non-empty \emph{induced} subgraphs of $K_n$, the set $P$ consists of \emph{complete} subgraphs of $K_n$. To avoid repetition, we do not reiterate this in what follows, but we ask the reader to keep it in mind throughout the paper.

In this paper, inspired by Conjecture \ref{conjecture1.2}, we study a problem about decompositions of complete geometric graphs. A \emph{geometric graph} $\mathsf{G}$\footnote{To avoid any possible confusion, geometric graphs will be denoted by Sans Serif Math letters.} is a drawing in the plane of a graph $G$, such that its vertices are points in general position (no three of them are collinear), and its edges are straight-line segments. Two geometric graphs have \emph{nonempty intersection} if either (1) they have a common vertex, or (2) there exists a pair of edges, one from each graph, that crosses. A \emph{complete geometric graph} of order $n$, denoted $\mathsf{K}_n$, is a graph in wich each vertex is adjacent to every other vertex.

A decomposition $[G,P]$ of a graph $G$ naturally induces a decomposition $[\mathsf{G},P]$ of any geometric graph $\mathsf{G}$. A $k$-$P$-\emph{coloring} (or a $k$-$P$-proper coloring) of a decomposition $[\mathsf{G},P]$ maps to the edges of $\mathsf{G}$ a color from a set with $k$ colors, so that (1) for every $\mathsf{H}\in P$ all of its edges have the same color, and (2) for  any two $\mathsf{H}_1,\mathsf{H}_2\in P$, if the two graphs have nonempty intersection, then $E(\mathsf{H_1})$ and $E(\mathsf{H_2})$ have different colors. The smallest positive integer $k$ for which there is a $k$-$P$-coloring of $[\mathsf{G},P]$ is the \emph{chromatic index} of a decomposition $[\mathsf{G},P]$ and it is denoted by $\chi'([\mathsf{G},P])$. 

In this paper, we give lower and upper bounds for the chromatic index of several families of decompositions (in complete subgraphs) of complete geometric graphs. Furthermore, we state a conjecture on an upper bound on $\chi'([\mathsf{G},P])$ for any partition $P$. The paper is organized as follows. In Section \ref{section2}, we consider complete geometric graphs whose set of vertices are points in general position, and prove the following results. \\

\noindent \textbf{Proposition \ref{proposition1}} Let $\mathsf{K}_n$ be a complete geometric graph of order $n$, 
every decomposition of $\mathsf{K}_n$ has chromatic index at most $\frac{n^2}{6} + O(n^{3/2})$.\\

\noindent \textbf{Theorem \ref{theorem3}} Let $\mathsf{K}_n$ be a complete geometric graph of order $n$, there are decompositions of $\mathsf{K}_n$ with chromatic index at least $\frac{n^2}{24.5}$.\\

Imposing a restriction to the partition, we prove:\\

\noindent\textbf{Proposition \ref{proposition2}} Let $\mathsf{K}_n$ be a complete geometric graph of order $n$, every decomposition of $\mathsf{K}_n$, that does not contain any triangle, has chromatic index at most $\frac{n^2}{12} + O(n^{3/2})$.\\

\noindent\textbf{Theorem \ref{theorem5}} Let $\mathsf{K}_n$ be a complete geometric graph of order $n$, there is a decomposition of $\mathsf{K}_n$ for which most of its elements are triangles, and with chromatic index at most $\frac{n^2}{9}+O(n^{3/2})$.\\

In Section \ref{section3}, we consider the case in which the vertices of the complete geometric graph are in convex position (they are the vertices of a convex polygon). We denote such a graph as $\mathsf{K}^\mathrm{c}_n$. We prove that:\\

\noindent\textbf{Theorem \ref{theorem4}} There are decompositions of $\mathsf{K}^\mathrm{c}_n$ with chromatic index at least $\frac{n^2}{9} - O(n)$.\\

\noindent\textbf{Theorem \ref{theorem3.2}} There are decompositions of $\mathsf{K}^\mathrm{c}_n$ into triangles with chromatic index at most $\frac{n^2}{36}+\Theta(n)$.\\

\noindent\textbf{Theorem \ref{theorem3.3}} Every decomposition of $\mathsf{K}^\mathrm{c}_n$ into triangles has chromatic index at least $\frac{n^2}{119}-O(n)$.\\

Finally, in Section \ref{section4}, we state the following conjecture for complete geometric graphs:\\

\noindent\textbf{Conjecture \ref{conjecture3}} Let $\mathsf{K}_n$ be any complete geometric graph of order $n$. The chromatic index of each decomposition of $\mathsf{K}_n$ is at most $\frac{n^2}{9}+\Theta(n)$.

We also deduce similar conjectures for other variants.

%%%%%%%%%%%%%%%%%%%%%%%%%%%%%%%%%%%%%%%%%%%%%%%%%%%%%%%%%%%%%%%%%%%%%%%%%%%%%%%%%%%%%%%%%%%%%%%%%%%%%%%%%%%%

\subsection{Notation, Terminology and Definitions}\label{notation}

Throughout this paper we assume that all sets of points in the plane are in general position. Note that every set of $n$ points in the plane induces a complete geometric graph. Let $S$ be a set of $n$ points in the plane and let $\mathsf{K}_n$ be the complete geometric graph induced by $S$. For brevity, we refer to the points in $S$ as vertices of $\mathsf{K}_n$, and to the straight-line segments connecting two points in $S$ as the edges of $\mathsf{K}_n$.

In the remainder of this subsection we present some definitions related with designs. 

\begin{definition}
Given integers $n$ and $\kappa$, a $2$-$(n,\kappa)$-\emph{design} $\mathcal{D}$ 
is a pair $(\mathcal{P},\mathcal{B})$, where
 $\mathcal{P}$ is a set of $n$ elements, and $\mathcal{B}$ is a collection  of $\kappa$-subsets  of $\mathcal{P}$  with the property that each subset of $\mathcal{P}$ of size $2$ is a subset of exactly one member of $\mathcal{B}$. The members of  $\mathcal{P}$ are called points and the members of $\mathcal{B}$ are called blocks.
\end{definition}

  \begin{definition}
  $\mathcal{D} = (\mathbb{Z}_n,\mathcal{B})$ is a \emph{cyclic design} if $\mathcal{B}$ is fixed by the automorphism $i\mapsto i+1$.
  \end{definition}

\noindent When it is clear from the context, we refer to a $2$-$(n,\kappa)$-design simply as \emph{$2$-design}. $2$-designs are commonly known in the literature as BIBD, standing for balanced incomplete block design.

We can think about any $2$-design $\mathcal{D}$ as a decomposition of the complete graph. That is, $\mathcal{D}$ is a particular case of a decomposition of the complete graph, in which, in the natural way, every block is an element of the partition, see \cite{MR3281127}.

It was proven in \cite{MR679425} that any $2$-design has chromatic index at most $\frac{\kappa n}{\kappa-1}$, and that, every cyclic $2$-design satisfies the EFL conjecture. However, in general, the EFL conjecture is open for $2$-designs, even for $\kappa=3$.

A $2$-$(n,3)$-design is called \emph{Steiner triple system}, and it is denoted by $STS(n)$. It is well-known that $STS(n)$ exists if and only if $n$ is congruent to $1$ or $3$ modulo $6$ (see \cite{MR0314644}). It is also well-known that a cyclic $STS(n)$ exists if and only if $n \not= 9$ and $n$ is congruent to $1$ or $3$ modulo $6$ (see \cite{MR1843379,MR1557027}). Figure \ref{Fig1} shows the unique $STS(9)$, see \cite{MR2661401}.

\begin{figure}
\begin{center}
\includegraphics{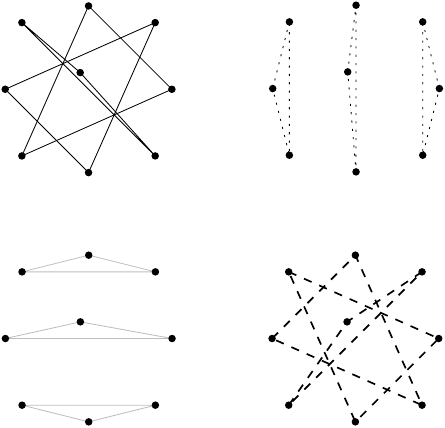}
\caption{The unique Steiner triple system $STS(9)$ as a decomposition of $K_9$.}\label{Fig1}
\end{center}
\end{figure}

There is another special type of well-known $2$-design: A \emph{projective plane} is a $2$-$(q^2+q+1,q+1)$-design having the following properties:
\begin{enumerate}
\item Given any two distinct blocks there is exactly one point incident with both of them.
\item There are four points so that no line is incident with more than two of them.
\end{enumerate}

The number $q$ is called the \emph{order} of the projective plane. A projective plane of order $q$ is denoted by $\Pi_q$. It is known \cite{MR1612570} that there exists at least one projective plane of order $q$ for any prime power $q$. 
%%%%%%%%%%%%%%%%%%%%%%%%%%%%%%%%%%%%%%%%%%%%%%%%%%%%%%%%%%%%%%%%%%%%%%%%%%%%%%%%%%%%%%%%%%%%%%%%%%%%%%%%%%%%

\section{Points in General Position}\label{section2}

Let $\mathcal{D}$ be a $2$-$(n,2)$-design. $\mathcal{D}$ induces the decomposition $[K_n,E(K_n)]$ of $K_n$. Therefore, Vizing's theorem verifies the EFL conjecture, since $\chi'([K_n,E(K_n)])$ is the usual chromatic index $\chi'(K_n)$ of the complete graph, and $\chi'(K_n) \leq n$.

In the case of the complete geometric graph $\mathsf{K}_n$, 
$\chi'([\mathsf{K}_n,E(\mathsf{K}_n)])$ has also been studied before as the chromatic index $\chi'(\mathsf{K}_n)$ of $\mathsf{K}_n$. In \cite{MR2155418} the authors prove that $n \leq \chi'(\mathsf{K}_n) \leq c n^{3/2}$, for some constant $c > 0$ and any complete geometric graph $\mathsf{K}_n$. See \cite{MR3461960} for similar results.

Now, we can state an upper bound for the chromatic index of
every decomposition of the complete geometric graph.

\begin{proposition}\label{proposition1}
Let $[\mathsf{K}_n,P]$ be a decomposition of $\mathsf{K}_n$, then \[\chi'([\mathsf{K}_n,P])\leq \frac{n^{2}}{6}+O(n^{3/2}).\]
\end{proposition}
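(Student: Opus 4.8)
The plan is to bound $\chi'([\mathsf{K}_n,P])$ by the maximum number of elements of $P$ that pairwise intersect — in fact, by the \emph{degree} of a cleverly chosen auxiliary structure — and to exploit that most members of $P$ are small (complete subgraphs on few vertices). First I would split $P$ according to block size: let $P_{\mathrm{big}}$ be the members on more than $t$ vertices (for a threshold $t$ to be optimized, roughly $t \sim n^{1/2}$), and $P_{\mathrm{small}}$ the rest. Since the blocks of $P$ form a decomposition, any two members share at most one vertex, so a member on $s$ vertices uses $\binom{s}{2}$ of the $\binom{n}{2}$ edges; counting edges shows $|P_{\mathrm{big}}| = O(n^2/t^2) = O(n)$ when $t \sim n^{1/2}$. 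These big blocks can simply be given distinct private colors at a cost of $O(n)$, absorbed into the $O(n^{3/2})$ error term (indeed into $O(n)$).

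The heart of the argument is coloring $P_{\mathrm{small}}$. Here I would build the conflict graph $C$ on vertex set $P_{\mathrm{small}}$, with an edge between two blocks when they have nonempty geometric intersection (a shared vertex, or a crossing pair of edges), and greedily color $C$; this needs at most $\Delta(C)+1$ colors. To bound $\Delta(C)$, fix a block $B$ on at most $t$ vertices. The blocks sharing a \emph{vertex} with $B$ number at most $t(n-2)$ (each of the $\le t$ vertices of $B$ lies in at most $n-2$ other blocks, since the other blocks through a fixed point partition the remaining edges at that point). The blocks \emph{crossing} $B$ are controlled by the number of edges crossing the $\binom{t}{2}$ edges of $B$, which is at most $\binom{t}{2}\binom{n}{2} = O(t^2 n^2)$ — but dividing by the edge-count of the partner block does not obviously help, so instead I would bound crossings more carefully: each of the $O(t^2)$ edges of $B$ is crossed by at most $\binom{n}{2}$ other edges, but we only care which \emph{blocks} they belong to; still, trivially at most $O(t^2 n^2)$ blocks, which is too weak. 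The fix is to not use a global threshold but to sum: $\chi'([\mathsf{K}_n,P])$ is at most the maximum over blocks $B$ of (number of blocks meeting $B$) $+1$, and the dominant contribution is the crossing count; to get $n^2/6$ one shows that the total number of crossing pairs of \emph{segments} in any straight-line $\mathsf{K}_n$ is $\binom{n}{4}$, and a convexity/averaging argument over the partition yields that some coloring uses at most $\tfrac13$ of the ``obvious'' bound — this is where the constant $\tfrac16$ comes from, presumably via a result that a set of segments pairwise crossing (a \emph{thrackle}-like family) in convex-like position has size $O(n)$, combined with partitioning the crossing graph into few sparse pieces.

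Concretely, I expect the clean route is: (i) reduce to coloring the conflict graph of blocks; (ii) observe this conflict graph decomposes as the union of the ``shared-vertex'' conflict graph, which is $O(n)$-degenerate after removing the $O(n)$ big blocks, and the ``crossing'' conflict graph; (iii) bound the chromatic number of the crossing conflict graph by $\tfrac{n^2}{6}+O(n^{3/2})$ using the fact (from \cite{MR2155418}-type arguments) that the edges of $\mathsf{K}_n$ can be partitioned into $O(n^{3/2})$ plane (crossing-free) subgraphs, so a fractional/averaging bound on how many blocks any single block can cross within one plane layer is $O(n)$, giving $O(n \cdot n^{3/2})$ — again too weak, so the actual saving must come from a direct convexity argument on $\sum_B (\text{blocks crossing } B)$.

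**Main obstacle.** The genuinely hard part is pinning down the constant $\tfrac16$: controlling the crossing-conflict degree of a block is easy up to a polynomial factor, but squeezing it to $\tfrac{n^2}{6}$ (rather than, say, $\tfrac{n^2}{2}$ or $cn^2$ for an unspecified $c$) requires an extremal estimate on how crossings distribute among the $\binom{s}{2}$-edge blocks of a decomposition — essentially a weighted convexity inequality showing the worst case is attained when all blocks are triangles (edges grouped in threes), forcing the $\tfrac16 = \tfrac12\cdot\tfrac13$ loss. Making that optimization rigorous, and checking the error term really is $O(n^{3/2})$ rather than $O(n^2)$, is where the real work lies; everything else (the big/small split, greedy coloring of the conflict graph) is routine.
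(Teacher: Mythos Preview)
Your proposal goes down an unnecessarily hard path and never actually closes. The paper's proof is a two-line counting argument, and you are missing the key trivial observation that makes it work.

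Here is the whole idea: split $P$ into the set $P_1$ of single edges and the set $P_2$ of blocks on at least three vertices. Every block in $P_2$ is a complete graph on $\ge 3$ vertices, hence contains at least $3$ edges; since the blocks are edge-disjoint and there are $\binom{n}{2}$ edges total, $|P_2|\le \binom{n}{2}/3 = n^2/6 - O(n)$. Now simply give every block in $P_2$ its own private color. For $P_1$ (single edges), the required coloring is exactly the ordinary geometric chromatic index of a subgraph of $\mathsf{K}_n$, and the cited result of Araujo et al.\ gives $\chi'(\mathsf{K}_n)\le c\,n^{3/2}$. Adding the two contributions yields $n^2/6 + O(n^{3/2})$.

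So the constant $\tfrac16$ is not coming from any crossing analysis, convexity inequality, or extremal estimate on how crossings distribute among blocks; it is just $\tfrac12\cdot\tfrac13$ from $\binom{n}{2}/3$. Your big/small split by vertex threshold, the conflict-graph degree bounds, and the attempts to control crossings are all beside the point: once you are willing to spend one color per non-edge block, the only nontrivial ingredient is the $O(n^{3/2})$ bound for coloring the leftover single edges, which is quoted from the literature. You even hint at the right endpoint (``the worst case is attained when all blocks are triangles'') but then look for a structural reason instead of the immediate edge count.
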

\begin{proof}
We know that every element of $P$, that is not an edge, contains at least three edges. Since at most $c n^{3/2}$ colors are needed to color the elements of $P$ that are edges,
 $\chi'([\mathsf{K}_n,P])$ is at most $\binom{n}{2}/3+cn^{3/2},$ and the result follows.
\end{proof}

We can give a better upper bound when $P$ has no triangles. Note that, in this case, every element of $P$ that is not an edge, contains at least six edges.

\begin{proposition}\label{proposition2}
Let $[\mathsf{K}_n,P]$ be a decomposition of $\mathsf{K}_n$ such that $P$ does not contain triangles,
then \[\chi'([\mathsf{K}_n,P])\leq \frac{n^{2}}{12}+O(n^{3/2}).\]
\end{proposition}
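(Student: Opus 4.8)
The plan is to follow the same strategy as in the proof of Proposition~\ref{proposition1}: bound the number of blocks of $P$, assign (essentially) one fresh color per block, and dispose of the single‑edge blocks separately using the known bound on $\chi'(\mathsf{K}_n)$. Write $P=P_1\cup P_{\geq}$, where $P_1$ is the set of blocks that consist of a single edge and $P_{\geq}=P\setminus P_1$ is the set of the remaining (non‑edge) blocks, discarding any block with no edge as irrelevant to a decomposition of the edge set.

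The first step is to nail down the structural remark stated just before the proposition. Since $P$ consists of \emph{induced} subgraphs of $\mathsf{K}_n$, every block is a complete geometric graph $\mathsf{K}_m$, whose number of edges is $\binom{m}{2}\in\{0,1,3,6,10,\dots\}$. A block with one edge lies in $P_1$; a block with exactly three edges is a triangle, which is forbidden; hence every block in $P_{\geq}$ is a $\mathsf{K}_m$ with $m\geq 4$ and therefore carries at least $\binom{4}{2}=6$ edges. As the blocks partition the $\binom{n}{2}$ edges of $\mathsf{K}_n$, this gives $6\,|P_{\geq}|\leq\binom{n}{2}$, i.e. $|P_{\geq}|\leq\binom{n}{2}/6$.

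It remains to color. The edges lying in blocks of $P_1$ form a geometric subgraph of $\mathsf{K}_n$, so a coloring of $E(\mathsf{K}_n)$ witnessing $\chi'(\mathsf{K}_n)\leq cn^{3/2}$ from \cite{MR2155418}, restricted to these edges, colors all blocks of $P_1$ with at most $cn^{3/2}$ colors while separating any two of them that share a vertex or cross — exactly what a $P$-coloring requires. Now give each block of $P_{\geq}$ its own new color, distinct from all others. Any two blocks with nonempty intersection then receive different colors, and the total number of colors used is at most $\binom{n}{2}/6+cn^{3/2}=\tfrac{n^2}{12}+O(n^{3/2})$, as claimed. There is no genuine obstacle here: the argument is the counting bound of Proposition~\ref{proposition1} refined by one observation, namely that forbidding triangles together with the blocks being induced (hence complete) forces the edge count of a non‑edge block to jump from $1$ directly to $6$, skipping the value $3$ — and this is precisely what halves the leading term from $n^2/6$ to $n^2/12$.
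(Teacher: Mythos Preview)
Your proof is correct and follows exactly the approach the paper intends: the paper states Proposition~\ref{proposition2} with a \qed, relying on the sentence just before it (non-edge blocks have at least $6$ edges) together with the counting argument of Proposition~\ref{proposition1}. You have simply written out that argument in full.
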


In the following paragraphs, we obtain a lower bound for the chromatic index of a decomposition consisting of any complete geometric graph and a given partition. 

%%%%%%%

In order to prove our result, first we divide the plane into nine regions and then we use this partition to obtain a decomposition of the edges of the complete graph. To obtain the partition we need we prove a corollary that follows from the following theorem \cite{Buck06}, originally proved by Ceder \cite{MR0188890}.

\begin{theorem}[\cite{Buck06, MR0188890}]\label{thm:buck}
Let $\mu$ be a finite measure absolutely continuos with respect to the Lebesgue measure on $\mathbb{R}^2$. Then there are three concurrent lines that partition the plane into six parts of equal measure. 
\end{theorem}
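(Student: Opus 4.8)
The plan is to prove this equipartition statement by a topological configuration-space argument. Normalise $\mu$ to a probability measure; by adding a small uniform background measure and removing it by a limiting argument at the end, one may assume $\mu$ has a continuous positive density, so that every direction has a unique bisecting line, depending continuously on the direction, and no line carries positive mass.

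For each $(p,u)\in\mathbb R^2\times S^1$ let $\rho_0=u,\rho_1,\dots,\rho_5$ be the rays emanating from $p$ in cyclic order such that each of the six sectors they bound has $\mu$-measure exactly $1/6$; these rays exist and are unique by the intermediate value theorem and depend continuously on $(p,u)$. Such a ``$1/6$-fan'' is carried by three concurrent lines precisely when $\rho_{i+3}=-\rho_i$ for $i=0,1,2$, and then those three lines solve the problem. So I would define the test map $G(p,u)=(\delta_0,\delta_1,\delta_2)\in\mathbb R^3$, where $\delta_i$ is the signed angular deviation of $\rho_{i+3}$ from $-\rho_i$, and aim to produce a zero of $G$. (Sanity check: if $G(p,u)=0$ then each of the three lines bisects $\mu$, as is forced for any six-equal-parts configuration.) The natural symmetry — replacing the starting ray $\rho_0$ by $\rho_1$ — generates a free $\mathbb Z_6$-action on the configuration space and acts on $\mathbb R^3$ by $(\delta_0,\delta_1,\delta_2)\mapsto(\delta_1,\delta_2,-\delta_0)$, an action whose unique fixed point is the origin; thus $G$ is an equivariant map and its zeros are exactly the sought configurations.

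The heart of the proof is to show that $G$ must vanish. I would pass to a compact model of the configuration space carrying the same $\mathbb Z_6$-action (for instance, restrict $p$ to a fixed bisecting line, or sweep $p$ over a large circle, and compactify), determine $G$ on the boundary of this model — which amounts to computing the limiting sector-measures as the pivot $p$ runs to infinity and the monodromy of the fan as $u$ makes a full turn — and then argue that the resulting boundary map admits no zero-free extension over the model; equivalently, that the relevant equivariant obstruction (a Borsuk--Ulam-type statement for this $\mathbb Z_6$-action) does not vanish. Finally, I would remove the positive-density assumption by approximating a general finite absolutely continuous $\mu$ by such measures, extracting a convergent sequence of solutions, and checking the limit is a genuine solution (no collapse, using absence of atoms).

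I expect the decisive difficulty to be this last topological step: the open model $\mathbb R^2\times S^1$ does not by itself force a zero, so one must choose the compactification — and pin down the precise limiting behaviour of the relevant sector-measures as $p$ escapes to infinity along a bisecting line — carefully enough that the equivariant degree/obstruction is provably nonzero.
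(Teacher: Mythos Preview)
The paper does not give its own proof of this statement: it is quoted as a known result, attributed to Ceder and cited via Bukh, and is used as a black box in the subsequent constructions. So there is no in-paper argument to compare your outline against.

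On the merits of your plan itself: the configuration-space/test-map setup is sound --- the $1/6$-fan is well defined under your positive-density hypothesis, the vanishing of $G$ characterises exactly the sought configurations, and the $\mathbb Z_6$-action you describe does act on $\mathbb R^3$ with the origin as its unique fixed point. However, as you yourself flag, the proposal stops precisely at the point where the real work begins: you do not specify a compact model, do not compute the boundary behaviour of $G$, and do not verify that the relevant equivariant obstruction is nonzero. Without that computation the argument is only a scaffold, not a proof. One concrete simplification you might exploit is that $\delta_0=0$ holds if and only if the line through $p$ in direction $u$ bisects $\mu$; restricting $p$ to the bisecting line $\ell_u$ therefore kills one coordinate of $G$ and reduces the problem to finding a zero of a map from a two-parameter family $(u,t)\in S^1\times\mathbb R$ to $\mathbb R^2$, where the asymptotics as $t\to\pm\infty$ are much easier to pin down.

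For context, Ceder's original argument is considerably more elementary than an equivariant-topology approach: it proceeds by nested intermediate-value/continuity arguments on the bisecting lines rather than through an obstruction computation. Your route could be made to work, but it is heavier machinery than the problem requires.
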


From this theorem we can obtain the following corollary (which will be used in the proof of Theorem~\ref{theorem5}), see also \cite{MR3164167}. 

\begin{corollary}\label{coro}
Let $S$ be a set of $n$ points in general position in the plane. There exist three lines, two of them parallel, that divide $S$ into six parts with at least $\frac{n}{6}-1$ points each.
\begin{proof}

Let $u$ be a unit vector with direction $\theta$. Consider the set of lines with direction $\theta$, denoted $\{\ell^i_\theta \colon  i \in \Delta\}$, where $\Delta$ is a set of indices. Let $\Pi_{\ell_\theta^i}^+$ be the closed positive half-plane defined by $\ell^i_\theta$, and let ${\Pi_{\ell^i_\theta}^-}$ be the closed negative half-plane defined by $\ell^i_\theta$. Let $X$ be a set of $n$ points in general position in the plane. Consider the following set:

$$
\left \{
{\Pi_{\ell_\theta^j}^+} \colon \vert{\Pi_{\ell_\theta^j}^+}\cap X\vert \geq k
\right \} \subseteq \left \{ {\Pi_{\ell_\theta^i}^+} \colon i \in \Delta 
\right \}.$$

That is, the set of all closed positive half-planes defined by $\ell^j$,  $j \in \Delta$, and such that the cardinality of their intersection with $X$ is at least $k$. Analogously, for the negative half-planes, consider the set: 

$$
\left \{
{\Pi_{\ell_\theta^j}^-}\colon \vert{\Pi_{\ell_\theta^j}^-}\cap X\vert \geq k
\right \} \subseteq \left \{ {\Pi_{\ell_\theta^i}^-} \colon i \in \Delta 
\right \}.
$$

Let $(a,b)$ be a pair of real numbers such that $0 < a,b < 1$ and $a+b=1$. Let

$$\ell_\theta^+(X; a) = \partial \bigcap_j \left \{
{\Pi_{\ell_\theta^j}^+}\colon \vert{\Pi_{\ell_\theta^j}^+}\cap X\vert \geq \lceil an \rceil
\right \} \text{ and}$$ 

$$\ell_\theta^-(X; b) = \partial \bigcap_j 
\left \{
{\Pi_{\ell_\theta^j}^-}\colon \vert{\Pi_{\ell_\theta^j}^-}\cap X\vert \geq \lceil bn \rceil
\right \}.$$

Denote by $\ell_\theta(X; a,b) := \ell_\theta^+(X;a) \cdot \ell_\theta^-(X;b)$ the line at the same distance from each of the lines $l^+$ and $l^-$.

Let $\ell_\theta^1 = \ell_\theta\left(S; \frac{1}{3}, \frac{2}{3} \right)$ and  $\ell_\theta^2 = \ell_\theta\left(S; \frac{2}{3}, \frac{1}{3} \right)$. Also, let $A = S \cap {\Pi_{\ell_\theta^1}^+}$ and $B = S \cap {\Pi_{\ell_\theta^2}^-}$. That is, $A$ is the point set lying in the positive half-plane defined by $\ell_\theta^1$, and $B$ is the point set lying in the negative half-plane defined by $\ell_\theta^2$. Both $A$ and $B$ contain $\lceil\frac{n}{3} \rceil$  points. Let
$\ell_\theta^A = \ell_\theta\left(A; \frac{1}{2},\frac{1}{2}\right)$ and 
$\ell_\theta^B = \ell_\theta\left(B; \frac{1}{2},\frac{1}{2}\right)$.

Note that, independently of $\theta$, the slope of the lines $\ell_\theta^A$ and $\ell_\theta^B$ can be changed continuously until it becomes equal to $\theta + \pi$. Thus there exist a unique slope $\theta^*$, for which the two lines are the same; that is, $\ell_{\theta^*}^A = \ell_{\theta^*}^B$. We denote this line by $\ell^3_{\theta}$.

Let $C = (S \setminus A \cup B) \cap {\Pi^+_{\ell_\theta^3}}$ and $D = (S \setminus A \cup B) \cap {\Pi^-_{\ell_\theta^3}}$. If $|C| \geq \frac{n}{6} - 1$ and $|D| \geq \frac{n}{6}-1$, then the proof is completed. Let us assume, without loss of generality, that $|C| > |D|$ and $|D| < \frac{n}{6}-1$. The choice of the lines  $\ell_\theta^1, \ell_\theta^2$ and $\ell_{\theta}^3$ depend on $\theta$. For each $\theta$ we have $\ell_\theta^1 = \ell_{-\theta}^2$, $\ell_\theta^2 = \ell_{-\theta}^1$ and $\ell_{\theta}^3 = \ell_{-\theta}^3$. Also, since for $\ell_{\theta}^3$  we have $|C| > |D|$ and $|D| < \frac{n}{6}-1$, then for $\ell_{-\theta}^3$ we have  $|D| > |C|$ and $|C| < \frac{n}{6}-1$. For continuity it follows that there exist a direction for which $|C| \geq \frac{n}{6} - 1$ and $|D| \geq \frac{n}{6}-1$. This completes the proof.
\end{proof}
\end{corollary}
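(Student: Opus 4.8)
The plan is to run an argument in the spirit of the planar ham--sandwich theorem (rather than Ceder's Theorem~\ref{thm:buck}, which produces \emph{concurrent} lines and so cannot give two parallel ones), keeping one rotational parameter free; it is convenient to think of $S$ as a mass distribution (each point replaced by a tiny disk of mass $1/n$) and to translate the resulting ``exactly half'' statements back to point counts at the end, absorbing the $O(1)$ rounding into the ``$-1$''. Fix a direction $\theta$. Let $\ell^1_\theta$ be the extreme line of direction $\theta$ whose closed positive half-plane contains exactly $\lceil n/3\rceil$ points of $S$, and let $A_\theta$ be that set; symmetrically let $\ell^2_\theta\parallel\ell^1_\theta$ cut off a set $B_\theta$ of $\lceil n/3\rceil$ points on its negative side. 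For $n\ge 4$ one has $2\lceil n/3\rceil\le n$, so $A_\theta$ and $B_\theta$ are disjoint and $M_\theta:=S\setminus(A_\theta\cup B_\theta)$, of size $n-2\lceil n/3\rceil\ge\tfrac n3-2$, lies in the strip between them; thus $\ell^1_\theta,\ell^2_\theta$ split $S$ as $A_\theta\mid M_\theta\mid B_\theta$.

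Next, build the transversal: for each (directed) slope $\phi$ pick halving lines $h_A(\phi)$ of $A_\theta$ and $h_B(\phi)$ of $B_\theta$; rotating $\phi$ by $\pi$ swaps the two sides of such a line, so a ham--sandwich-type intermediate-value argument gives a slope $\theta^\ast(\theta)$ with $h_A(\theta^\ast)=h_B(\theta^\ast)=:\ell^3_\theta$. Its slope is not $\theta$, so $\ell^3_\theta$ crosses all three strips and cuts each of $A_\theta,B_\theta,M_\theta$ in two: the four pieces from $A_\theta$ and $B_\theta$ have at least $\lfloor\lceil n/3\rceil/2\rfloor\ge\tfrac n6-1$ points each, and it remains to control the two pieces $C_\theta,D_\theta$ of $M_\theta$, which satisfy $|C_\theta|+|D_\theta|=|M_\theta|\ge\tfrac n3-2$.

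Finally, balance $C$ and $D$ by rotating $\theta$. If $|C_\theta|,|D_\theta|\ge\tfrac n6-1$ we are done; otherwise say $|C_\theta|>|D_\theta|$ and $|D_\theta|<\tfrac n6-1$. The whole construction is equivariant under the reflection $\theta\mapsto-\theta$ (reflecting $S$ in a fixed axis), which sends $\ell^1_\theta\mapsto\ell^2_{-\theta}$, $\ell^2_\theta\mapsto\ell^1_{-\theta}$, $\ell^3_\theta\mapsto\ell^3_{-\theta}$ and swaps $C$ with $D$; hence at direction $-\theta$ the reverse inequalities hold. So the integer $f(\theta):=|C_\theta|-|D_\theta|$ changes sign as $\theta$ rotates from $\theta$ to $-\theta$, and it changes only at finitely many critical directions, at each of which a single point of $S$ crosses one of the three lines and $f$ moves by a controlled step. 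A discrete intermediate-value argument then gives a direction at which $f$ lies in the target window $\{-1,0,1\}$, and with $|C_\theta|+|D_\theta|\ge\tfrac n3-2$ this forces $|C_\theta|,|D_\theta|\ge\tfrac n6-1$.

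The main obstacle is exactly this last bookkeeping. One must enumerate the critical directions --- where a point of $S$ meets $\ell^1_\theta$ or $\ell^2_\theta$ (so a point moves between $A_\theta$ or $B_\theta$ and $M_\theta$), where a point of $M_\theta$ meets $\ell^3_\theta$, and where the halving-line combinatorics of $A_\theta$ or $B_\theta$ flip, making $\ell^3_\theta$ jump --- and verify that across each of them $f$ moves by a bounded amount, so it cannot leap over the window $\{-1,0,1\}$. A general-position perturbation handles the degenerate directions, and the $O(1)$ losses (from the floors, from the size of $M_\theta$, from the window) are precisely what the ``$-1$'' in the statement absorbs.
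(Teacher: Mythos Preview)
Your approach is essentially the same as the paper's: choose two parallel lines of direction $\theta$ cutting off $\lceil n/3\rceil$ points on each side, find a common halving line $\ell_\theta^3$ for the two outer pieces $A_\theta,B_\theta$, and then rotate $\theta$, using the symmetry $\theta\leftrightarrow-\theta$ (which swaps $\ell^1\leftrightarrow\ell^2$, fixes $\ell^3$, and swaps $C\leftrightarrow D$) together with an intermediate-value argument to balance $|C_\theta|$ and $|D_\theta|$. The paper simply writes ``for continuity it follows'' at the last step, whereas you are more explicit about the discrete bookkeeping (critical directions, bounded jumps of $f(\theta)=|C_\theta|-|D_\theta|$) and about the mass-distribution smoothing that makes the continuity argument honest; but the underlying argument is identical.
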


%%%%%%%%
Now we describe the partition of the plane. Let $S$ be a set of $n=7q+6$ points in general position in the plane, where $q$ is a prime power greater than 2. Let $l_1$, $l_2$, and $l_3$ be three horizontal lines, listed from top to bottom. Let $S' \subseteq S$ be the set  of points between $l_1$ and $l_2$, and let $S_7 \subseteq S$ be the set of points between $l_2$ and $l_3$. It is clear that we can choose the three lines so that $|S'|=6q+6$ and $|S_1|=q$. Furthermore, by Theorem~\ref{thm:buck} there are three concurrent lines that divide the set $S'$ into $6$ parts each containing $q$ points of $S'$ in its interior. Name this lines  $l_4$, $l_5$, and $l_6$, respectively, and label the six sets as $S_2,S_3,S_4,S_5,S_6,S_7$, listed in clockwise order around $p$, where $p$ is the point of intersection of the three lines. Refer to Figure \ref{fig:Fig0_a}. 

\begin{figure}

\begin{subfigure}{0.4\textwidth}
\includegraphics[width=\linewidth]{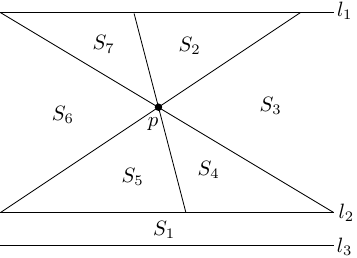}
\caption{  } \label{fig:Fig0_a}
\end{subfigure}
\hspace*{\fill} % separation between the subfigures
\begin{subfigure}{0.4\textwidth}
\includegraphics[width=\linewidth]{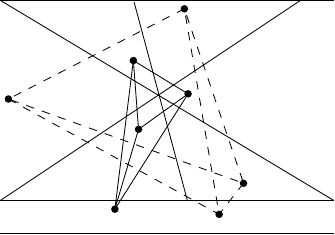}
\caption{ } \label{fig:Fig0_b}
\end{subfigure}
\caption{(a) The line configuration. (b) One subgraph $\mathsf{X}_{i,j}$ represented with dashed edges, and one subgraph $\mathsf{Y}_{i',j'}$ represented with solid edges.} \label{fig:1}
\end{figure}

Since $q$ is a prime power, there exists at least one projective plane of order $q$. Let $\Pi_q$ be a projective plane of order $q$, and let $z$ and $ \mathfrak{p}_{i,j}, 1 \leq i \leq 4, 1 \leq j \leq q$, be points in $\Pi_q$. Let four lines incident with $z$ be $\ell_1=\{\mathfrak{p}_{1,1}, \ldots, \mathfrak{p}_{1,q},z\}$, $\ell_2=\{\mathfrak{p}_{2,1}, \ldots, \mathfrak{p}_{2,q},z\}$, $\ell_3=\{\mathfrak{p}_{3,1}, \ldots, \mathfrak{p}_{3,q},z\}$ and $\ell_4=\{\mathfrak{p}_{4,1}, \ldots, \mathfrak{p}_{4,q},z\}$. (This type of design is known in the literature as transversal design, in our case we are using a $TD(q+1,q)$-design.) Use these four lines, and the correspondence $(a,b) \mapsto v_{a,b} $ and $(a,b) \mapsto u_{a,b} $, to label the points of the subsets $S_i$, as follows. Using $\ell_1\setminus z$, label the points of $S_2$ as $\{v_{1,1}, \ldots, v_{1,q}\}$, and the points of $S_3$ as $\{u_{1,1}, \ldots, u_{1,q}\}$. Similarly, using $\ell_2\setminus z$ label the points of $S_4$ as $\{v_{2,1}, \ldots, v_{2,q}\}$, and the points of $S_5$ as $\{u_{2,1}, \ldots, u_{2,q}\}$. Using $\ell_3\setminus z$ label the points of $S_6$ as $\{v_{3,1}, \ldots, v_{3,q}\}$, and the points of $S_7$ as $\{u_{3,1}, \ldots, u_{3,q}\}$. Finally, using $\ell_4\setminus z$ label the points of $S_1$ as $\{v_{4,1}, \ldots, v_{4,q}\}$.

Now, using points of $S$, we construct some complete geometric graphs.

For each $i$ and $j$ in $\{1, \ldots, q\}$, let $\mathsf{X}_{i,j}$ be a complete geometric graph of order four. 
Next we choose the four vertices of $\mathsf{X}_{i,j}$. 
Consider the line $\overline{(\mathfrak{p}_1,\mathfrak{p}_i)(\mathfrak{p}_4,\mathfrak{p}_j)}\in \Pi_q$ induced by the points $(\mathfrak{p}_1,\mathfrak{p}_i)$ and $(\mathfrak{p}_4,\mathfrak{p}_j)$. Note that $(\mathfrak{p}_1,\mathfrak{p}_i)$ belongs to $\ell_1$, and that $(\mathfrak{p}_4,\mathfrak{p}_j)$ belongs to $\ell_4$. In accordance with the labeling in the paragraph above, the point $(\mathfrak{p}_1,\mathfrak{p}_i)$ corresponds to the point $v_{1,i} \in S_2$, and the point $(\mathfrak{p}_4,\mathfrak{p}_j)$ corresponds to the point $v_{4,j} \in S_1$. These two points are two vertices of $\mathsf{X}_{i,j}$. Now consider $i'$ and $j'$ so that $\overline{(\mathfrak{p}_1,\mathfrak{p}_i)(\mathfrak{p}_4,\mathfrak{p}_j)}=\overline{(\mathfrak{p}_2,\mathfrak{p}_i')(\mathfrak{p}_3,\mathfrak{p}_j')}$. As before, the point $(\mathfrak{p}_2,\mathfrak{p}_i')$ corresponds to the point $v_{2,i'} \in S_4$, and the point $(\mathfrak{p}_3,\mathfrak{p}_j')$ corresponds to the point $v_{3,j'} \in S_6$. These two points are the other two vertices of $\mathsf{X}_{i,j}$. That is, the set of vertices of $\mathsf{X}_{i,j}$ is $\{v_{1,i}, v_{4,j}, v_{2,i'}, v_{3,j'}\}$. Refer to Figure \ref{fig:Fig0_b}. Note that there are  exactly $q^2$ of these graphs.

For each $i$ and $j$ in $\{1, \ldots, q\}$, let $\mathsf{Y}_{i,j}$ be a complete geometric graph of order four. 
Next we choose the four vertices of $\mathsf{Y}_{i,j}$. Consider the line $\overline{(\mathfrak{p}_1,\mathfrak{p}_i)(\mathfrak{p}_4,\mathfrak{p}_j)}\in \Pi_q$ induced by the points $(\mathfrak{p}_1,\mathfrak{p}_i)$ and $(\mathfrak{p}_4,\mathfrak{p}_j)$. Note that $(1\mathfrak{p}_,\mathfrak{p}_i)$ belongs to $\ell_1$, and that $(\mathfrak{p}_4,\mathfrak{p}_j)$ belongs to $\ell_4$. In accordance with the labeling in the paragraph above, the point $(\mathfrak{p}_1,\mathfrak{p}_i)$ corresponds to the point $u_{1,i} \in S_3$, and the point $(\mathfrak{p}_4,\mathfrak{p}_j)$ corresponds to the point $v_{4,j} \in S_1$. These two points are two vertices of $\mathsf{Y}_{i,j}$. Now consider $i'$ and $j'$ so that $\overline{(\mathfrak{p}_1,\mathfrak{p}_i)(\mathfrak{p}_4,\mathfrak{p}_j)}=\overline{(\mathfrak{p}_2,\mathfrak{p}_i')(\mathfrak{p}_3,\mathfrak{p}_j')}$. As before, the point $(\mathfrak{p}_2,\mathfrak{p}_i')$ corresponds to the point $u_{\mathfrak{p}_2,\mathfrak{p}_i'} \in S_5$, and the point $(\mathfrak{p}_3,\mathfrak{p}_j')$ corresponds to the point $u_{\mathfrak{p}_3,\mathfrak{p}_j'} \in S_7$. These two points are the other two vertices of $\mathsf{Y}_{i,j}$. That is, the set of vertices of $\mathsf{Y}_{i,j}$ is $\{u_{1,i}, v_{4,j}, u_{2,i'}, u_{3,j'}\}$. Refer to Figure \ref{fig:Fig0_b}. Note that there are exactly $q^2$ of these graphs.

The following two observations were proven in \cite{MR3461960}. (We omit both proofs.)

\begin{observation}\label{lemma1}
The point $p$ is inside each of the triangles induced by the graphs $\mathsf{X}_{i,j}-v_{4,j}$ and $\mathsf{Y}_{i,j}-v_{4,j}$, defined above.
\end{observation}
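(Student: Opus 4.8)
My plan is to strip away the specifics of the construction, reducing the statement to one elementary incidence fact about a pencil of three lines through a point, and then verify that fact by a short angular computation.

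First I would record the incidences produced by the labelling. By construction $\mathsf{X}_{i,j}$ has vertex set $\{v_{1,i},v_{4,j},v_{2,i'},v_{3,j'}\}$ with $v_{1,i}\in S_2$, $v_{2,i'}\in S_4$ and $v_{3,j'}\in S_6$, so $\mathsf{X}_{i,j}-v_{4,j}$ is a triangle with exactly one vertex in each of $S_2$, $S_4$, $S_6$; likewise $\mathsf{Y}_{i,j}-v_{4,j}$ is a triangle with exactly one vertex in each of $S_3$, $S_5$, $S_7$. I would also recall that $l_4,l_5,l_6$ are concurrent at $p$, that the six open sectors they determine contain $S_2,S_3,S_4,S_5,S_6,S_7$ in cyclic order around $p$, and that each of these sets lies strictly in the interior of its sector. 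Two of the six sectors are \emph{adjacent} precisely when they share a bounding ray, that is, when they are consecutive in the cyclic order; hence $\{S_2,S_4,S_6\}$ and $\{S_3,S_5,S_7\}$ are exactly the two triples of pairwise non-adjacent sectors. So it suffices to prove the following claim: \emph{if $a,b,c$ lie in the interiors of three pairwise non-adjacent sectors among the six sectors determined by three concurrent lines through a point $p$, then $p$ lies in the interior of the triangle $abc$.}

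To prove the claim I would place $p$ at the origin and let $0\le\beta_1<\beta_2<\beta_3<\pi$ be the directions of the three lines, so the six bounding rays point in the cyclically increasing directions $\beta_1,\beta_2,\beta_3,\beta_1+\pi,\beta_2+\pi,\beta_3+\pi$ and the six open sectors are the angular intervals between consecutive such directions. There are exactly two ``alternating'' triples of these intervals, and the central reflection of the plane about $p$ fixes the three lines while interchanging the two triples, so without loss of generality $a$, $b$, $c$ lie respectively in $(\beta_1,\beta_2)$, $(\beta_3,\beta_1+\pi)$, $(\beta_2+\pi,\beta_3+\pi)$. Writing $\varphi_a,\varphi_b,\varphi_c$ for the directions from $p$ to $a,b,c$, one reads off $\beta_1<\varphi_a<\beta_2<\beta_3<\varphi_b<\beta_1+\pi<\beta_2+\pi<\varphi_c<\beta_3+\pi<\beta_1+2\pi$, and a one-line estimate then shows that the three consecutive angular gaps $\varphi_b-\varphi_a$, $\varphi_c-\varphi_b$ and $(\varphi_a+2\pi)-\varphi_c$ all lie strictly between $0$ and $\pi$. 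Since all three cyclic gaps between the directions to $a$, $b$, $c$ are smaller than $\pi$, these directions lie in no closed half-circle; equivalently, no closed half-plane whose boundary passes through $p$ contains all of $a,b,c$. As $a,b,c$ are in general position, hence not collinear, this is exactly the assertion that $p$ lies in the interior of their convex hull, that is, of the triangle $abc$.

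I expect the only real obstacle to be the middle step, namely making sure that the purely combinatorial hypothesis ``$a,b,c$ sit in pairwise non-adjacent sectors'' does force all three angular gaps at $p$ to be below $\pi$. This is the one place where the concurrency of $l_4,l_5,l_6$ enters essentially — the analogous statement is false for three lines in general position — together with the fact, guaranteed by the construction, that each point lies strictly inside its sector rather than on a bounding line. The remaining ingredients, the two incidence lookups and the half-plane characterisation of membership in the interior of a triangle, are routine.
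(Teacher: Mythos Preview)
Your argument is correct. The reduction to the alternating-sector claim is exactly right: by construction each of $S_2,\dots,S_7$ sits in one of the six open sectors cut out by the concurrent lines $l_4,l_5,l_6$, and the two triples $\{S_2,S_4,S_6\}$ and $\{S_3,S_5,S_7\}$ are precisely the alternating ones. Your angular computation is also clean; the three inequalities $\varphi_b-\varphi_a<\beta_1+\pi-\beta_1$, $\varphi_c-\varphi_b<\beta_3+\pi-\beta_3$, and $(\varphi_a+2\pi)-\varphi_c<\beta_2+2\pi-(\beta_2+\pi)$ do the job, and the ``no closed half-plane through $p$ contains $a,b,c$'' criterion for $p\in\operatorname{int}\operatorname{conv}\{a,b,c\}$ is standard.

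As for comparison with the paper: there is nothing to compare. The paper does not prove this observation at all; it states explicitly that Observations~\ref{lemma1} and~\ref{lemma2} were proven in \cite{MR3461960} and that both proofs are omitted. Your write-up therefore supplies a self-contained proof where the paper gives only a citation. The only cosmetic point worth tightening is that you invoke general position to rule out collinearity of $a,b,c$, whereas in fact your angular conclusion already forces this: if $a,b,c$ were collinear on a line $\ell$ not through $p$, then the closed half-plane through $p$ with boundary parallel to $\ell$ and containing $\ell$ would contain all three, contradicting what you showed. So the general-position hypothesis is not actually needed for this step.
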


\begin{observation}\label{lemma2}
Every two graphs $\mathsf{X}_{i,j}$ and $\mathsf{Y}_{i,j}$, intersect. That is, there is a pair of edges, one from each graph, which cross.
\end{observation}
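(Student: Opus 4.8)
The statement asserts that for each pair of indices $i,j \in \{1,\dots,q\}$, the geometric graphs $\mathsf{X}_{i,j}$ and $\mathsf{Y}_{i,j}$ cross, i.e. some edge of one meets some edge of the other. The plan is to exploit Observation~\ref{lemma1}: the point $p$ (the common intersection point of $l_4,l_5,l_6$) lies inside the triangle $T_X$ induced by $\mathsf{X}_{i,j}-v_{4,j} = \{v_{1,i},v_{2,i'},v_{3,j'}\}$ and also inside the triangle $T_Y$ induced by $\mathsf{Y}_{i,j}-v_{4,j} = \{u_{1,i},u_{2,i'},u_{3,j'}\}$. So both graphs contain a triangle surrounding $p$.

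First I would record that the three vertices of $T_X$ lie in the regions $S_2$, $S_4$, $S_6$ respectively, and the three vertices of $T_Y$ lie in $S_3$, $S_5$, $S_7$ respectively; by construction these six regions are the six sectors around $p$ cut by $l_4,l_5,l_6$, listed so that $S_2,\dots,S_7$ occur in clockwise cyclic order. Thus as we walk clockwise around $p$ we meet a vertex of $T_X$, then a vertex of $T_Y$, then a vertex of $T_X$, then a vertex of $T_Y$, etc.: the six vertices alternate between the two triangles in their angular order around $p$. Second, the key geometric claim: two triangles that both contain a common interior point $p$, and whose six vertices alternate (in the cyclic angular order seen from $p$) between the two triangles, must have intersecting boundaries — in fact each side of one triangle is crossed by some side of the other. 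This is the heart of the argument; I would prove it by a rotational/parity argument: consider the three sides of $T_X$; each side, together with $p$, determines whether a given direction from $p$ "exits $T_X$ through that side." As we rotate a ray from $p$ through a full turn, the side of $T_X$ it exits through changes exactly three times (at the three vertices of $T_X$), and likewise the exiting side of $T_Y$ changes exactly three times (at the three vertices of $T_Y$); since the vertices interleave, on some angular interval the ray exits $T_X$ and $T_Y$ through sides that are "out of sync," forcing a boundary crossing. Equivalently, one can argue by contradiction: if the boundaries were disjoint, then since both triangles contain $p$, one triangle would be entirely contained in the interior of the other, which is impossible because each triangle has a vertex in a sector that the other triangle does not reach (its vertices lie in the three complementary sectors).

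The main obstacle is making the "alternating $\Rightarrow$ crossing" step fully rigorous rather than merely plausible from the figure; the cleanest route is the nesting dichotomy: two simple closed curves in the plane with disjoint traces are either nested or exterior to one another, and having a common interior point $p$ rules out the exterior case, so one would be inside the other — but each triangle has vertices in sectors $S_k$ that the other triangle avoids entirely (since its vertices are confined to the complementary three sectors), contradicting containment. Once that is in place, a crossing pair of edges exists, which is exactly what "nonempty intersection" requires, and the observation follows. Finally I would remark that the orientation of $T_X$ relative to $T_Y$ (clockwise vs.\ counterclockwise around $p$) does not affect the argument, since only the cyclic alternation of vertices is used, completing the proof. \qed
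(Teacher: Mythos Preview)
The paper does not supply its own proof of this observation --- it cites \cite{MR3461960} and explicitly omits the argument --- so there is no in-paper proof to compare against. What matters is whether your argument stands on its own, and unfortunately the key geometric step is false.

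You reduce to the two triangles $T_X=\mathsf{X}_{i,j}-v_{4,j}$ and $T_Y=\mathsf{Y}_{i,j}-v_{4,j}$ and claim that (i) both contain $p$ and (ii) their six vertices alternate in angular order around $p$, together force a boundary crossing. They do not. Take $p$ at the origin, three concurrent lines at angles $0^\circ,60^\circ,120^\circ$, let $T_X$ have vertices at distance $1$ from $p$ at angles $30^\circ,150^\circ,270^\circ$, and let $T_Y$ have vertices at distance $10$ from $p$ at angles $90^\circ,210^\circ,330^\circ$. Both triangles contain $p$, the vertices alternate perfectly, yet $T_X$ lies entirely in the interior of $T_Y$ and no edges cross. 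Your ``nesting dichotomy'' breaks exactly here: the sentence ``each triangle has a vertex in a sector that the other triangle does not reach'' is incorrect, because a triangle containing $p$ with vertices in $S_3,S_5,S_7$ necessarily extends into all six sectors near $p$ and may well swallow $T_X$. The rotational/parity sketch fails for the same reason --- in the example the exit-side labels interleave just as you describe, with no crossing.

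Two further remarks. First, as actually used in Theorem~\ref{theorem3}, the observation asserts that \emph{all} $2q^2$ graphs $\mathsf{X}_{i,j},\mathsf{Y}_{i,j}$ are pairwise intersecting, not only the pair sharing the indices $(i,j)$; your reading is too narrow. Second, any correct proof must use more than the triangles $T_X,T_Y$: the fourth vertex $v_{4,j}\in S_1$, lying below $l_2$ and hence outside both triangles, together with the three edges to it, is part of the structure and is what rules out the nested configuration above. You should rebuild the argument around the full $K_4$'s rather than the deleted triangles.
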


By construction, every pair of points, one from $S_7$ and one from $S_1$, define a complete geometric graph $\mathsf{X}_{i,j}$. Similarly, every pair of points, one from $S_3$ and one from $S_1$, define a complete geometric graph $\mathsf{Y}_{i,j}$. These graphs are edge-disjoint and pairwise intersecting. From these observations, we can get the following theorem.

\begin{theorem}\label{theorem3}
For every natural number $n$, there exists a decomposition $[\mathsf{K}_n,P]$ of $\mathsf{K}_n$ such that \[\chi'([\mathsf{K}_n,P])\geq \frac{n^{2}}{24.5}-\Theta(n).\]
\end{theorem}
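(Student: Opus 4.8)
The plan is to build, for $n=7q+6$ with $q$ a prime power greater than $2$, an explicit decomposition $[\mathsf{K}_n,P]$ whose elements are exactly the $q^2$ graphs $\mathsf{X}_{i,j}$, the $q^2$ graphs $\mathsf{Y}_{i,j}$, and enough edge-sets to cover the remaining edges of $\mathsf{K}_n$ (each leftover edge as its own one-element block, say), and then to argue that the $2q^2$ graphs $\mathsf{X}_{i,j}$ and $\mathsf{Y}_{i,j}$ force many colors. The first step is to record that the $\mathsf{X}_{i,j}$ are pairwise edge-disjoint and the $\mathsf{Y}_{i,j}$ are pairwise edge-disjoint (each pair of points of $S_1\times S_7$, resp.\ $S_1\times S_3$, lies in a unique block, by the transversal/projective-plane incidence), and that no $\mathsf{X}_{i,j}$ shares an edge with any $\mathsf{Y}_{i',j'}$; hence $P$ is a genuine decomposition. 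The second step is to show that in any $P$-coloring all $2q^2$ of these blocks must receive pairwise distinct colors, which gives $\chi'([\mathsf{K}_n,P])\ge 2q^2 = 2\bigl(\tfrac{n-6}{7}\bigr)^2 = \tfrac{2n^2}{49}-\Theta(n) = \tfrac{n^2}{24.5}-\Theta(n)$, and finally to remove the primality restriction on $n$ by the usual argument: for arbitrary $n$ pick the largest admissible $q$ with $7q+6\le n$ (Bertrand-type density of prime powers gives $q = \tfrac{n}{7}-o(n)$), apply the construction to a subset of $7q+6$ points, and pad the remaining edges with singleton blocks, which only lowers the bound by a lower-order term.

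The heart of the argument is the pairwise-distinctness claim for the $2q^2$ blocks, and this is where Observations~\ref{lemma1} and~\ref{lemma2} do the work. By Observation~\ref{lemma2} every $\mathsf{X}_{i,j}$ crosses the corresponding $\mathsf{Y}_{i,j}$, so those two must get different colors; that handles the "diagonal" pairs. For two distinct graphs among the $\mathsf{X}$'s, or among the $\mathsf{Y}$'s, or an off-diagonal $\mathsf{X}_{i,j}$ versus $\mathsf{Y}_{i',j'}$, I would use Observation~\ref{lemma1}: every triangle $\mathsf{X}_{i,j}-v_{4,j}$ and every triangle $\mathsf{Y}_{i,j}-v_{4,j}$ contains the common point $p$ in its interior. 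Two triangles that both contain $p$ in their interior must have a pair of crossing edges (two convex regions sharing an interior point whose boundaries are triangles cannot be nested nor disjoint unless one contains the other, and containment is impossible when each has a vertex outside the other — which holds here since the three "arms" $S_2,\dots,S_7$ around $p$ are separated by the lines $l_4,l_5,l_6$ and each triangle has one vertex in three different arms). Thus any two of the $2q^2$ graphs have nonempty intersection in the geometric sense, so in a $P$-coloring they are pairwise constrained to distinct colors.

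The main obstacle I expect is making the "two triangles sharing an interior point must cross" step fully rigorous in the present configuration, and in particular checking that distinct graphs among the $\mathsf{X}_{i,j}$ really are forced to cross rather than merely sharing the point $p$ as a non-vertex interior point of both triangles — one has to rule out the degenerate possibility that the edges only touch at $p$, which is excluded because $p$ lies in no edge of $\mathsf{K}_n$ (the points of $S$ are in general position and $p$ is not one of them, and one may perturb so that no edge passes through $p$). A secondary technical point is bookkeeping: verifying that the $\mathsf{X}_{i,j}$ are edge-disjoint from each other requires that two distinct lines of $\Pi_q$ through distinct point-pairs cannot induce the same four labeled vertices, which follows from the projective-plane axiom that two points determine a unique line together with the fixed correspondence $(a,b)\mapsto v_{a,b}$. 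Once those are in hand, the counting $2q^2 = \tfrac{2n^2}{49}-\Theta(n)$ and the extension to all $n$ via density of prime powers are routine, so I would state them briefly and refer to the analogous argument in \cite{MR3461960}.
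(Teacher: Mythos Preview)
Your proposal is correct and follows essentially the same approach as the paper: build the $2q^2$ edge-disjoint, pairwise-intersecting $K_4$'s $\mathsf{X}_{i,j},\mathsf{Y}_{i,j}$ from the seven-region/projective-plane construction, put them into a partition $P$, and conclude $\chi'([\mathsf{K}_n,P])\ge 2q^2=\tfrac{n^2}{24.5}-\Theta(n)$, extending to all $n$ via density of prime powers.

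The only notable difference is how the pairwise-intersection claim is justified. The paper does not split into ``diagonal'' versus ``off-diagonal'' cases: the sentence immediately preceding the theorem already asserts that \emph{all} of the graphs $\mathsf{X}_{i,j},\mathsf{Y}_{i,j}$ are edge-disjoint and pairwise intersecting, and the proof simply cites Observation~\ref{lemma2} together with~\cite{MR3461960} for this fact. In particular, your reading of Observation~\ref{lemma2} as covering only the matched pair $\mathsf{X}_{i,j}$ versus $\mathsf{Y}_{i,j}$ is narrower than what the paper intends; the paper treats the full pairwise statement as already established in~\cite{MR3461960}. Your alternative route---using Observation~\ref{lemma1} (the common interior point $p$) to force crossings among the remaining pairs---is a perfectly valid way to recover what the paper imports by citation, and your caveat about ruling out nested triangles is exactly the point one would need to check if reproving this from scratch; the paper simply does not re-argue it.
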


\begin{proof} Observation~\ref{lemma2} and the prime number theorem imply that for every positive integer $n$, any $\mathsf{K}_n$ has a set of at least $2(\frac{n}{7})^2-\Theta(n)$ edge disjoint complete geometric subgraphs $\mathsf{G}_{i,j}$ ($\mathsf{G}_{i,j}\in\{\mathsf{X}_{i,j},\mathsf{Y}_{i,j}\}$) which are pairwise intersecting. Any partition $P$ containing the graphs $\mathsf{G}_{i,j}$ must assign a different color to each of these graphs in any $\chi([\mathsf{K}_n,P])$-$P$-coloring of $[\mathsf{K}_n,P]$.
\end{proof}

%%%%%%%

Triangles play an important role in decompositions. In the case of Steiner Triple Systems $STS(n)$ it is known  that the EFL-conjecture is true for $n\leq 19$ \cite{MR2661401}. In the geometric setting triangles seem to be important too: the bound given in Theorem~\ref{theorem3} can be seen as consisting mostly of triangles, however, if we restrict the decomposition to not have any triangles, Proposition~\ref{proposition2} shows that the chromatic index decreases significantly.  Notice, however, that the set of triangles induced by the proof of Theorem~\ref{theorem3} contains exactly the same point in common, this property is stronger than the one required in our definition of intersection: two triangles intersect if they share a common interior point or a vertex. A natural question is:  if we loose this strong restriction, how does the chromatic index of decompositions consisting of triangles behave? 

To end this section,  we show that there is a decomposition of the complete geometric graph that consists mostly of triangles, and with chromatic index at most $\frac{n^2}{9}$.

\begin{theorem}\label{theorem5}
For any sufficiently large natural number $n$ there exists a decomposition $[\mathsf{K}_n,P]$ of $\mathsf{K}_n$ such that (1) every element of $P$ is a triangle, except for $o(n^2)$ edges, and (2) \[\chi'([\mathsf{K}_n,P])\leq \frac{n^{2}}{9}+O(n^{3/2}).\]
\end{theorem}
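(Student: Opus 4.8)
The plan is to decompose $\mathsf{K}_n$ so that most blocks are triangles coming from a Steiner-type triple system built on six roughly equal-size groups of points, arranged by Corollary~\ref{coro}, and then to color the decomposition by exploiting the geometry of those six regions to avoid many crossings. First I would apply Corollary~\ref{coro} to the point set $S$: obtain three lines (two of them parallel) splitting $S$ into six parts $S_1,\dots,S_6$, each of size $\frac n6-O(1)$, and discard the at most $O(n)$ leftover points, handling all edges incident to them as singleton blocks — this contributes only $O(n)$ colors and $o(n^2)$ non-triangle edges, absorbed into the error term. The key structural point is that with two of the three lines parallel, the six regions can be cyclically ordered so that a triangle with one vertex in each of three ``consecutive'' regions is forced to contain the common crossing point $p$ in its interior (as in Observation~\ref{lemma1}); two such triangles then automatically intersect, but triangles whose vertex-triples sit in disjoint region-patterns can be made crossing-free.

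Next I would put, on each individual region $S_t$ (of size $m\approx n/6$), a Steiner triple system $STS(m)$ when $m\equiv 1,3\pmod 6$ (adjusting $m$ by $O(1)$ otherwise, again absorbed into the error), giving $\binom m2/3$ internal triangles per region. Since a cyclic $STS(m)$ can be colored with about $m$ colors and distinct regions are separated by the cutting lines — so triangles internal to different regions never cross and share no vertex — all six internal systems together use only $O(n)$ colors. The bulk of the edges, namely those between two different regions, I would organize into triangles that use two points of one region and one of another (or one point from each of three regions), chosen via a resolvable-design / Latin-square scheme so that the triangles fall into $O(1)$ families, each family being a set of pairwise non-crossing triangles that can be colored with a single pass of roughly $n/3$ colors. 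Counting: the dominant term is $\binom n2 \cdot \frac13 \approx \frac{n^2}{6}$ edges total, but the within-region triangles (cost $O(n)$) account for $6\cdot\frac12(n/6)^2 = \frac{n^2}{12}$ edges ``for free'', leaving $\approx \frac{n^2}{12}$ edges in $\approx \frac{n^2}{36}$ between-region triangles; these split into a bounded number of crossing-free families, and a more careful accounting of how many colors each family truly needs — together with the observation that three of the six regions form one ``half-plane cluster'' and the other three another — brings the total down to $\frac{n^2}9+O(n^{3/2})$.

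The main obstacle I anticipate is the coloring of the between-region triangles: unlike the within-region systems, a triangle with vertices in two or three different regions can cross many others, so I must exhibit an explicit partition of these triangles into few crossing-free sub-families and bound the chromatic index of each. Here the parallel pair of cutting lines from Corollary~\ref{coro} is essential: it lets me sweep the between-region triangles by a common direction and argue that triangles in the same ``slab class'' are nested or disjoint rather than crossing, so each class needs only about the number of edges in it divided by $n$, i.e.\ $O(n)$, colors — but making the constant come out to exactly $1/9$ rather than something larger will require pairing up region-classes cleverly (three regions on each side of the parallel strip, analogous to the $S'$-versus-$S_7$ split in the construction preceding Theorem~\ref{theorem3}) and reusing colors across non-intersecting classes. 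The remaining steps — verifying the $o(n^2)$ bound on non-triangle edges and the $O(n^{3/2})$ slack from the $\chi'(\mathsf{K}_n)$ bound of \cite{MR2155418} used on the discarded edges — are routine.
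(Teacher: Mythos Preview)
Your proposal has a genuine gap, and it begins with an arithmetic error that hides the real difficulty. You write that after the six within-region Steiner systems have absorbed $6\cdot\frac12(n/6)^2=\frac{n^2}{12}$ edges, ``$\approx\frac{n^2}{12}$ edges'' remain. In fact $\binom{n}{2}-\frac{n^2}{12}\approx\frac{5n^2}{12}$ between-region edges remain, i.e.\ about $\frac{5n^2}{36}$ further triangles, not $\frac{n^2}{36}$. Once your internal $STS$ have consumed every edge inside each $S_t$, any remaining triangle must take its three vertices from three \emph{distinct} regions; but then covering all between-region edges by such triangles is equivalent to decomposing the ``region graph'' $K_6$ into triangles, and $K_6$ admits no $STS$ (since $6\not\equiv 1,3\pmod 6$). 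So with six groups and full internal triple systems you simply cannot complete the partition into triangles, let alone color it with $\frac{n^2}{9}$ colors. Your closing paragraph correctly identifies the between-region coloring as the hard step, but the sketch (``sweep by a common direction'', ``nested or disjoint'') does not produce an actual bound, and the missing factor of~$5$ means even an optimal such argument would not reach $1/9$.

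The paper's proof avoids exactly this obstruction. It refines the six cells coming from Corollary~\ref{coro} into \emph{nine} regions $R_1,\dots,R_9$ of size $q\approx n/9$ (with $q$ a prime power), and uses a projective plane $\Pi_q$ to thread $q^2$ vertex-disjoint copies of $\mathsf{K}_9$ through the nine regions, one vertex per region. On each copy it lays the unique $STS(9)$; the geometry of the regions (two triangles of the $STS(9)$ are separated by the line $l_3$) lets each $\mathsf{K}_9$ be colored with only $8$ colors on its between-cluster triangles, for $8q^2\le\frac{8}{81}n^2$ colors so far. The three clusters $T_1,T_2,T_3$ (each of size $\approx n/3$) are themselves pairwise separated by the vertical lines, so the same recursive construction on each $T_i$ can reuse a common palette, giving the recurrence $T(n)\le 8(n/9)^2+T(n/3)$, which sums to $\frac{n^2}{9}+O(n)$; the leftover non-triangle edges take $O(n^{3/2})$ further colors. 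The two ideas you are missing are (i) passing to nine groups so that $STS(9)$ organizes the cross-edges, and (ii) the recursion with color reuse across separated clusters, which is what drives the constant down to $1/9$.
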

\begin{proof}

First, we divide the plane into nine regions, and then we use these regions to construct a partition of the edges of $\mathsf{K}_9$.

We divide the plane using a specific configuration of lines. Next, we describe such configuration. Let $S$ be a set of $n$ points in general position in the plane. Applying an affine transformation, by Corollary~\ref{coro}, there are two vertical lines $l_1$ and $l_2$, and one horizontal line $l_3$, so that they divide the set $S$ into $6$ parts of equal size. That is, each part contains at least $\frac{n}{6}-1$ points in its interior (at most $c<6$ points are not considered). We label each one of the six sets as $S_1,S_2,S_3,S_4,S_5,S_6$, refer to Figure~\ref{fig:Fig3}. 

It is known \cite{MR1851081} that for every positive integer $x$, the interval $[x-o(x^{0.525}),x]$ contains prime power numbers. Let $q$ be the largest of these primer powers for $x=\left\lfloor n/9 \right\rfloor$.

Divide the region containing each set $S_i$ using a line parallel to $l_3$, into two regions $R_i$ and $R_i'$, refer to Figure~\ref{fig:Fig3a}.  Each of these lines is  chosen so that every $R_i$ contains $q$ points. 
Then, define the regions $R_7=R_1'\cup R_4'$, $R_8=R_2'\cup R_5'$ and $R_9=R_3'\cup R_6'$. 

Since $q$ is a prime power, there exists a projective plane of order $q$. Let $\Pi_q$ be a projective plane of order $q$, and let $z$ and $ \mathfrak{p}_{i,j}, 1 \leq i \leq 9, 1 \leq j \leq q$, be points in $\Pi_q$. Let nine lines incident with $z$ be $\ell_i=\{\mathfrak{p}_{i,1}, \ldots, \mathfrak{p}_{i,q},z\}$. Use these nine lines, and the usual correspondance  $(a,b) \mapsto v_{a,b} $, to label the points in  $R_i$ as $\{v_{i,1}, \ldots, v_{i,q}\}$, such that each point in $R_i$ corresponds to a point in $\ell_i\setminus z$.

Next, we construct a decomposition $P$ of $\mathsf{K}_n$ such that every element of $P$ is a triangle. At the same time, we give a proper coloring of the elements of the partition.

Let $T_1=S_1\cup S_4$, $T_2=S_2\cup S_5$, and $T_3=S_3\cup S_6$. In the next paragraph we describe a partial partition of the edges of $\mathsf{K}_n$ that uses all the edges between $T_1$, $T_2$, and $T_3$.

Since every two points in $\Pi_q$ determine exactly one line, choosing a pair of points from two different regions $R_i$ and $R_{i'}$ (for $i,i' \in \{1, \ldots, 9\}$) is sufficient to induce one fixed geometric graph $\mathsf{K}_9$. Note that such graph has exactly one point from each of the nine regions. Let $\{v_{1,j_1},\dots,v_{9,j_9}\}$, where $j_1, \ldots, j_9 \in \{1, \ldots, q\}$, be the vertices of one fixed $\mathsf{K}_9$. As we mentioned in Section \ref{notation}, $\mathsf{K}_9$ has a partition into twelve triangles. The partition consists of four classes, which are pairwise non-crossing. Each class has exactly three triangles. See Figure \ref{Fig1}. Choose one of these classes as the triangles $(v_{1,j_1},v_{4,j_4},v_{7,j_7})$, $(v_{2,j_2},v_{5,j_5},v_{8,j_8})$ and $(v_{3,j_3},v_{6,j_6},v_{9,j_9})$, and one more as $(v_{1,j_1},v_{2,j_2},v_{3,j_3})$, $(v_{4,j_4},v_{5,j_5},v_{6,j_6})$ and $(v_{7,j_7},v_{8,j_8},v_{9,j_9})$. The remaining triangle classes are determined by this choice. The triangles $(v_{1,j_1},v_{2,j_2},v_{3,j_3})$ and $(v_{4,j_4},v_{5,j_5},v_{6,j_6})$ are separated by the line $l_3$. We assign to these two triangles the same color. Furthermore, we assign one different color to each one of the seven triangles having edges between $T_1$, $T_2$ and $T_3$. See Figure \ref{fig:Fig3b}. Note that, in fact we can assign the same color to the triangles $(v_{1,j_1},v_{4,j_4},v_{7,j_7})$, $(v_{2,j_2},v_{5,j_5},v_{8,j_8})$ and $(v_{3,j_3},v_{6,j_6},v_{9,j_9})$.  Therefore, $8$ colors are sufficient to color the edges in the partial partition of $\mathsf{K}_n$, which uses all the edges between $T_1$, $T_2$ and $T_3$.
We repeat the process in every part $T_1$, $T_2$ and $T_3$ using recursion. The total number of colors used in the triangles is $T(n)=8q^2+T(2\left\lfloor n/6 \right\rfloor)\leq 8(n/9)^2 + T(n/3)$ which leads to $T(n)\leq  n^2/9 + O(n).$ The uncolored edges require at most $O(n^{3/2})$ colors. Therefore, the result follows.
\end{proof}

\begin{figure}
\begin{center}
\includegraphics[scale=0.85]{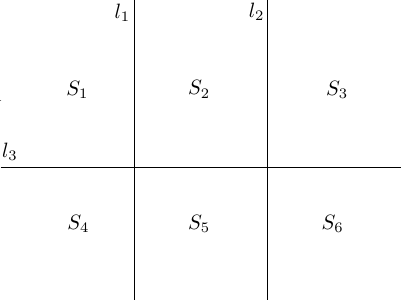}
\caption{The line configuration $\mathcal{M}$.}\label{fig:Fig3}
\end{center}
\end{figure}

\begin{figure}
\begin{center}
\includegraphics[scale=0.85]{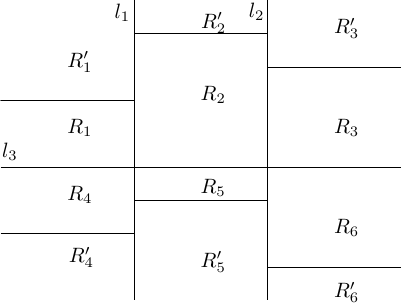}
\caption{The twelve regions $R_i$ and $R'_i$}\label{fig:Fig3a}
\end{center}
\end{figure}

\begin{figure}
\begin{center}
\includegraphics[scale=0.85]{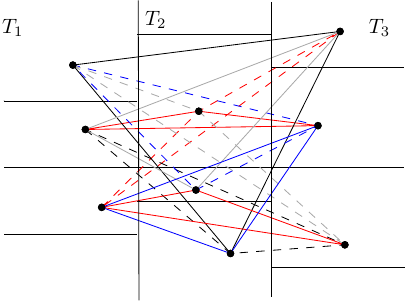}
\caption{Nine triangles, two disjoint triangles are shown in solid red.}\label{fig:Fig3b}
\end{center}
\end{figure}

%%%%%%%%%%%%%%%%%%%%%%%%%%%%%%%%%%%%%%%%%%%%%%%%%%%%%%%%%%%%%%%%%%%%%%%%%%%%%%%%%%%%%%%%%%%%%%%%%%%%%%%%%%%%

\section{Points in Convex Position}\label{section3}

In this section, we consider the case in which the vertices of the complete geometric graph are in covex position. We call this type of graph a \emph{complete convex geometric graph}, and we denote it as $\mathsf{K}^\mathrm{c}_n$. The crossing pattern of the edge set of a complete convex geometric graph depends only on the number of vertices, and not
on their particular position. Therefore, without loss of generality we assume that the point set of the graph corresponds to the vertices of a regular polygon.

Let $\mathsf{K}^\mathrm{c}_n$ be a complete convex geometric graph of order $n$, and let $\{1, \ldots, n\}$ be the vertices of the graph listed in clockwise order. In the remainder of this section we exclusively
work with this type of graphs. It is important to bear in mind that all sums are taken modulo $n$, for the sake of simplicity we will avoid writing this explicitly.  We denote by $e_{i,j}$ the edge between the vertices $i$ and $j$.

\emph{Cano et. al.} proved in \cite{MR3205151} that for every set of points in convex position in the plane there are at least $\frac{n^2}{9} - n$ triangles which contain a point in common. It is not hard to see that this set of triangles is also pairwise intersecting; from this result our next theorem follows. However, note that the intersection of the set of such triangles is not empty, and that this is a stronger condition that the one we need. We now prove that if we obtain a decomposition of the complete graph in pairwise-intersecting triangles with not necessarily a point in common, the chromatic index does not decreases.

\begin{theorem}\label{theorem4}
For every natural number $n$ there exists a decomposition $[\mathsf{K}^\mathrm{c}_n,P]$ of $\mathsf{K}^\mathrm{c}_n$ such that \[\chi'([\mathsf{K}^\mathrm{c}_n,P])\geq \frac{n^{2}}{9}-O(n).\]
\end{theorem}
\begin{proof}

Suppose that $3|n$. Let $S_1=\{1,\dots,n/3\}$, $S_2=\{n/3+1,\dots,2n/3\}$ and $S_3=\{2n/3+1,\dots,n\}$. Now consider the geometric complete bipartite graph with vertex set $S_2\cup S_3$ and edge set $M=\{e_{i,j}|i\in S_2, j\in S_3\}$. Then, $M$ can be decomposed into $n/3$ (not necessarily plane) perfect matchings $M_k$, $1\leq k \leq n/3$. Every edge $e_{i,j}$ in $M_k$ defines a triangle with vertex set $\{i,j,k\}$ for any $k\in S_1$. Moreover, these triangles are edge-disjoint and every two triangles have non-empty intersection.
\end{proof}

The following theorem states that there exist decompositions into triangles of $\mathsf{K}^\mathrm{c}_n$ such that its chromatic index is not so high.

\begin{theorem}\label{theorem3.2}
Let $n=18k+1$ with $k$ even. There exists a decomposition $[\mathsf{K}^\mathrm{c}_n,P]$ of $\mathsf{K}^\mathrm{c}_n$ such that each element of $P$ is a triangle and \[\chi'([\mathsf{K}^\mathrm{c}_n,P])\leq \frac{n^{2}}{36}+\Theta(n).\]
\end{theorem}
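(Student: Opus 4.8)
The plan is to construct an explicit decomposition of $\mathsf{K}^c_n$ into triangles together with a good colouring, mimicking the recursive strategy of Theorem~\ref{theorem5} but exploiting the convex position and the arithmetic structure $n=18k+1$. First I would split the $n$ vertices into three consecutive arcs $S_1,S_2,S_3$ of size (roughly) $n/3$ each, so that a triangle with one vertex in each arc is ``spread out'' and — crucially — two such triangles that use disjoint triples of indices are guaranteed to cross, while triangles built inside a single arc stay confined to a small convex region. The condition $n=18k+1$ (with $k$ even) is exactly what is needed for the inner recursion: the arcs have size a multiple of $6$ (so that Steiner-type triangle decompositions of the relevant bipartite/complete pieces exist), and the extra $+1$ plus $18\mid(n-1)$ lets us invoke a cyclic $STS$ on the full vertex set, or rather on a suitably sized subsystem, as the base of the construction.

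The main construction step: decompose the edges between the three arcs. The complete tripartite graph on $S_1\cup S_2\cup S_3$ can be partitioned into transversal triangles (one vertex from each $S_i$) by a Latin-square/transversal-design argument, since $|S_1|=|S_2|=|S_3|$; there are $(n/3)^2$ such triangles. I would then group these $(n/3)^2$ triangles into colour classes using the structure of a projective plane or affine plane of order close to $n/3$: each line of the plane gives a parallel class of roughly $n/3$ mutually non-crossing transversal triangles, and non-crossing triangles that are moreover pairwise disjoint as vertex sets can share a colour. A careful count should give on the order of $n/3$ colours needed for the ``between-arcs'' triangles at the top level — but to reach the target $n^2/36$ rather than $n^2/9$ I expect one must do something cleverer: pair up colour classes (as in Theorem~\ref{theorem5}, where the triangles $(v_1,v_2,v_3)$ and $(v_4,v_5,v_6)$ on opposite sides of a separating line get the same colour), here using the convex position to certify that many transversal-triangle classes are mutually non-crossing and hence can be merged, shaving the constant by a factor of $4$. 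The edges inside each arc $S_i$ are then handled recursively on $n/3$ vertices, so the colour count satisfies $T(n)\le (\text{const})\cdot (n/3)^2 + 3\,T(n/3)$, and the constant must come out to $1/4$ per level so that $\sum_{\ell\ge 0} 3^{\ell}\cdot \frac14\cdot (n/3^{\ell+1})^2 = \frac14\cdot\frac{n^2}{9}\cdot\sum 3^{-\ell} = \frac{n^2}{24}$ — I would need to tune the arc sizes (perhaps two arcs of size $n/3$ and exploiting that within-arc recursion only touches $2n/3$ of the points, as in the $T(n)\le 8(n/9)^2+T(n/3)$ recurrence) to land exactly on $n^2/36+\Theta(n)$.

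The hard part will be the colour-merging argument: proving that a large family of transversal triangles (spanning all three arcs) can be $k$-coloured with $k$ asymptotically $\tfrac{1}{4}\cdot\tfrac{n}{3}$-ish per recursion level so that same-coloured triangles are pairwise non-crossing and vertex-disjoint. This is where the convex position must be used decisively — in general position one only gets the weaker $n^2/9$ of Theorem~\ref{theorem5} — and it will likely require identifying, for a transversal design / net of order $q\approx n/3$, a partition of its $q^2$ blocks into about $q/?$ groups each of which is realizable as a non-crossing family on the convex point set, analogous to but stronger than Observation~\ref{lemma1}. I would also need to check the base case (a cyclic $STS$ on $\Theta(n)$ points has chromatic index $\Theta(n)$, which is absorbed into the $\Theta(n)$ error term) and verify that merging colours across the three recursive branches $S_1,S_2,S_3$ is legitimate, i.e. that a triangle inside $S_1$ never crosses and never shares a vertex with a triangle inside $S_2$ or $S_3$ — which is immediate since the arcs are vertex-disjoint and, being consecutive arcs of a convex polygon, their internal chords do not cross. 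Assembling these pieces and solving the recurrence with the right leading constant yields $\chi'([\mathsf{K}^c_n,P])\le \frac{n^2}{36}+\Theta(n)$.
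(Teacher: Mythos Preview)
Your proposal has a fundamental flaw that makes the whole strategy unworkable. You split the vertices into three consecutive arcs $S_1,S_2,S_3$ and take transversal triangles with one vertex in each arc. You even note, correctly, that two such triangles on disjoint vertex triples are \emph{guaranteed to cross}. But that observation kills the upper-bound argument rather than helping it: if every pair of vertex-disjoint transversal triangles crosses, and any two sharing a vertex must also receive different colours by the definition of a $k$-$P$-colouring, then \emph{every} transversal triangle needs its own colour. That is $(n/3)^2\approx n^2/9$ colours just at the top level, before any recursion---and indeed this is precisely the construction the paper uses for the \emph{lower} bound in Theorem~\ref{theorem4}. No amount of ``colour-merging'' can get below $n^2/9$ here, because there simply are no two transversal triangles that can share a colour. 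The recursion and the arithmetic about hitting the constant $1/36$ are therefore moot. (There is also the smaller issue that $n=18k+1$ is odd and not divisible by $3$, so equal arcs of size $n/3$ do not exist and the recursion does not preserve the congruence class; but this is secondary to the structural obstruction above.)

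The paper's proof is entirely different and non-recursive. It takes a specific \emph{cyclic} Steiner triple system $STS(n)$, namely Peltesohn's construction via difference triples, so that the triangle orbits under the rotation $i\mapsto i+1$ are described by explicit triples of edge-lengths. These $3k$ orbits are then packed into $k/2+1$ ``boxes'' (rows of a table), with up to six orbits per box, in such a way that the six triangles obtained by applying the \emph{same} rotation to the six orbit representatives in a given box are pairwise non-crossing and vertex-disjoint in convex position. Each box then consumes exactly $n$ colours (one per rotation), for a total of $n(k/2+1)=n^2/36+\Theta(n)$. The crux is the combinatorial/geometric verification that the specific difference triples in each row yield non-crossing families---this is what exploits convexity, and it has nothing to do with a tripartition into arcs.
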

\begin{proof}
We use the mapping $i\mapsto i+1$ mod $n$. Consider an edge $e_{i,j}$. The orbit of this edge is $\{e_{i+x,j+x}:1\leq x \leq n\}$. Since $n$ is odd, every orbit has $n$ edges. The orbit of $e_{i,j}$ is determined completely by specifying the \emph{minimum difference} or \emph{length} $\min\{i-j,j-i\}$ mod $n$, hence, a number $d_{i,j}$ is a representation of the orbit of $e_{i,j}$ where $1\leq d_{i,j} \leq n/2$.

The orbit of a triangle $\{i,j,k\}$ is defined similarly by the triple $(d_{i,j},d_{j,k},d_{i,k})$. Every orbit has $n$ triangles when $3\nmid n$. The three lengths are not sufficient to determine the orbits of triples, therefore, $(d_{i,j},d_{j,k},d_{i,k})$ is a \emph{difference triple} if each entry is at most $n/2$,  $d_{i,j}<d_{j,k}<d_{i,k}$ and either $d_{i,j}+d_{j,k}+d_{i,k}\equiv 0$ mod $n$ or $d_{i,j}+d_{j,k}\equiv d_{i,k}$ mod $n$, see Chapter 7 of \cite{MR1843379}.

In \cite{MR1557027} the following partition $P$ of $K_n$ arising from a cyclic Steiner Triple System $STS(n)$ is given: Table \ref{Table} shows in the columns $E_i$ the minimum differences ($1\leq d_{i,j} \leq 9k$) and it is divided into $3$ types of orbits $(E_1,E_2,E_3)$, $(E_4,E_5,E_6)$ and $(E_7,E_8,E_9)$, each of them has $k$ orbits. To match Table \ref{Table} with the table given in \cite{MR1557027} pg. 253, consider the column $(E_4,E_5,E_6)$ in the inverse order.

We consider the induced partition of $\mathsf{K}^\mathrm{c}_n$ by $P$. Every element $t$ in the column ``Boxes'' is a set of $n$ colors, namely, $\{1+n(t-1),\dots,n+n(t-1)\}$. The set of triples $(d_{i,j},d_{j,k},d_{i,k})$ in the rows $t$ are colored with the color $n+n(t-1)$ and the corresponding other $n-1$ elements of the orbits, $(d_{i,j}+s,d_{j,k}+s,d_{i,k}+s)$ where $1\leq s\leq n-1$, are colored with the color $s+n(t-1)$ respectively.

\setlength\tabcolsep{3pt}
\begin{table}
\begingroup\makeatletter\def\f@size{9}\check@mathfonts
\begin{tabular}{lll|lll|llll}
$E_{1}$&$E_{2}$&$E_{3}$&$E_{4}$&$E_{5}$&$E_{6}$&$E_{7}$&$E_{8}$&$E_{9}$&$Boxes$\tabularnewline
\hline
\hline
&&&\textbf{$(3k,$}&\textbf{$3k+1,$}&\textbf{$6k+1)$}&\textbf{$(2,$}&\textbf{$8k,$}&\textbf{$8k+2)$}&$\frac{1}{2}k-1$\tabularnewline
&&&$(3k-3,$&$4k+3,$&$7k)$&$(5,$&$8k-1,$&$8k+4)$&$\frac{1}{2}k-2$\tabularnewline
$(1,$&$4k+1,$&$4k+2)$&$(3k-6,$&$4k+5,$&$7k-1)$&$(8,$&$8k-2,$&$8k+6)$&$\frac{1}{2}k-3$\tabularnewline
$(4,$&$4k,$&$4k+4)$&$(3k-9,$&$4k+7,$&$7k-2)$&$(11,$&$8k-3,$&$8k+8)$&$\frac{1}{2}k-4$\tabularnewline
$\vdots$&$\vdots$&$\vdots$&$\vdots$&$\vdots$&$\vdots$&$\vdots$&$\vdots$&$\vdots$&\tabularnewline
$(\tfrac{3}{2}k-17,$&$\tfrac{7}{2}k+7,$&$5k-10)$&$(\tfrac{3}{2}k+12,$&$5k-7,$&$\tfrac{13}{2}k+5)$&$(\tfrac{3}{2}k-10,$&$\tfrac{15}{2}k+4,$&$9k-6)$&$3$\tabularnewline
$(\tfrac{3}{2}k-14,$&$\tfrac{7}{2}k+6,$&$5k-8)$&$(\tfrac{3}{2}k+9,$&$5k-5,$&$\tfrac{13}{2}k+4)$&$(\tfrac{3}{2}k-7,$&$\tfrac{15}{2}k+3,$&$9k-4)$&$2$\tabularnewline
$(\tfrac{3}{2}k-11,$&$\tfrac{7}{2}k+5,$&$5k-6)$&$(\tfrac{3}{2}k+6,$&$5k-3,$&$\tfrac{13}{2}k+3)$&$(\tfrac{3}{2}k-4,$&$\tfrac{15}{2}k+2,$&$9k-2)$&$1$\tabularnewline
$(\tfrac{3}{2}k-8,$&$\tfrac{7}{2}k+4,$&$5k-4)$&$(\tfrac{3}{2}k+3,$&$5k-1,$&$\tfrac{13}{2}k+2)$&$(\tfrac{3}{2}k-1,$&$\tfrac{15}{2}k+1,$&$9k)$&$1$\tabularnewline
\cline{4-10}
$(\tfrac{3}{2}k-5,$&$\tfrac{7}{2}k+3,$&$5k-2)$&$(\tfrac{3}{2}k,$&$5k+1,$&$\tfrac{13}{2}k+1)$&$(\tfrac{3}{2}k+2,$&$\tfrac{15}{2}k,$&$9k-1)$&$2$\tabularnewline
$(\tfrac{3}{2}k-2,$&$\tfrac{7}{2}k+2,$&$5k)$&$(\tfrac{3}{2}k-3,$&$5k+3,$&$\tfrac{13}{2}k)$&$(\tfrac{3}{2}k+5,$&$\tfrac{15}{2}k-1,$&$9k-3)$&$3$\tabularnewline
\cline{1-3}
$\vdots$&$\vdots$&$\vdots$&$\vdots$&$\vdots$&$\vdots$&$\vdots$&$\vdots$&$\vdots$&\tabularnewline
$(3k-23,$&$3k+9,$&$6k-14)$&$(18,$&$6k-11,$&$6k+7)$&$(3k-16,$&$7k+6,$&$8k+11)$&$\frac{1}{2}k-4$\tabularnewline
$(3k-20,$&$3k+8,$&$6k-12)$&$(15,$&$6k-9,$&$6k+6)$&$(3k-13,$&$7k+5,$&$8k+9)$&$\frac{1}{2}k-3$\tabularnewline
$(3k-17,$&$3k+7,$&$6k-10)$&$(12,$&$6k-7,$&$6k+5)$&$(3k-10,$&$7k+4,$&$8k+7)$&$\frac{1}{2}k-2$\tabularnewline
$(3k-14,$&$3k+6,$&$6k-8)$&$(9,$&$6k-5,$&$6k+4)$&$(3k-7,$&$7k+3,$&$8k+5)$&$\frac{1}{2}k-1$\tabularnewline
$(3k-11,$&$3k+5,$&$6k-6)$&$(6,$&$6k-3,$&$6k+3)$&$(3k-4,$ & $7k+2,$ & $8k+3)$&$\frac{1}{2}k$\tabularnewline
$(3k-8,$&$3k+4,$&$6k-4)$&$(3,$&$6k-1,$&$6k+2)$&$(3k-1,$ & $7k+1,$ & $8k+1)$&$\frac{1}{2}k+1$\tabularnewline
$(3k-5,$&$3k+3,$&$6k-2)$&&&\multicolumn{1}{l}{}&&&&$\frac{1}{2}k+1$\tabularnewline
$(3k-2,$&$3k+2,$&$6k)$&&&\multicolumn{1}{l}{}&&&&$\frac{1}{2}k$\tabularnewline
\end{tabular}
\caption{Difference triples in columns and a coloring in rows.} \label{Table}
\endgroup
\end{table}

Figure \ref{Fig4} shows a chromatic class $X$ colored by the colors in box $1$. The remaining chromatic classes are obtained by rotation of $X$, the mapping $i\mapsto i+1$ mod $n$.

Finally, we get a proper coloring using $n(k/2+1)$ colors and the result follows.
\end{proof}

\begin{figure}
\begin{center}
\includegraphics[scale=.6]{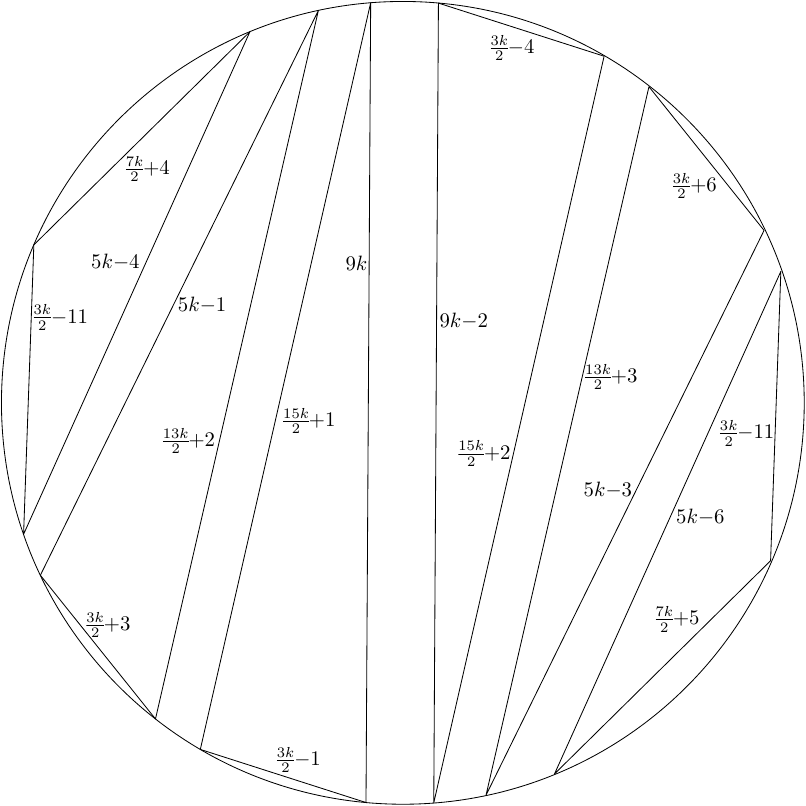}
\caption{A chromatic class of 6 triangles.}\label{Fig4}
\end{center}
\end{figure}

We now show that a quadratic number of colors is indeed needed for every decomposition of the complete convex geometric graph into triangles.

\begin{theorem}\label{theorem3.3}
 For every decomposition $[\mathsf{K}^\mathrm{c}_n,P]$ of $\mathsf{K}^\mathrm{c}_n$ such that each element of $P$ is a triangle, we have \[\chi'([\mathsf{K}^\mathrm{c}_n,P])\geq \frac{n^{2}}{119}-O(n).\]
\end{theorem}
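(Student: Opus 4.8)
The plan is to exhibit, inside any triangle decomposition of $\mathsf{K}^c_n$, a large family of pairwise intersecting triangles and then invoke the fact that such a family forces as many colors as its size. So the core task is a counting/pigeonhole argument on crossings. First I would recall that in the complete convex geometric graph, two triangles $\{a,b,c\}$ and $\{d,e,f\}$ on six distinct vertices fail to intersect only when one of them lies entirely within a single arc determined by the other; equivalently, two vertex-disjoint triangles are non-crossing precisely when their vertex sets can be separated by a chord. The key quantitative input is: if we fix one triangle $T$, the number of triangles that are vertex-disjoint from $T$ and do not cross $T$ is small relative to $n^2$ — each such triangle lives in one of the three open arcs cut off by the vertices of $T$, so there are at most $3\binom{n/3}{3}$-ish many of them in the worst balanced case, i.e. $O(n^3)$, which on its own is not enough; the real bound must come from a more careful double count over the whole decomposition.

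The decomposition has exactly $\binom{n}{2}/3 = n^2/6 - n/6$ triangles. I would set up an auxiliary graph $\mathcal{H}$ on these triangles, joining two triangles when they intersect (share a vertex or cross). A proper $P$-coloring of the decomposition is exactly a proper vertex coloring of $\mathcal{H}$, so $\chi'([\mathsf{K}^c_n,P]) = \chi(\mathcal{H}) \ge |V(\mathcal{H})| / \alpha(\mathcal{H})$, where $\alpha(\mathcal{H})$ is the largest set of pairwise non-intersecting (hence pairwise vertex-disjoint, pairwise non-crossing) triangles. So everything reduces to showing $\alpha(\mathcal{H}) \le c\, n$ for a suitable constant, and then $\chi(\mathcal{H}) \ge (n^2/6)/(cn) = n^2/(6c) - O(n)$; choosing the bookkeeping so that $6c$ comes out around $119$ (equivalently $c \approx 19.83$) is what pins the constant. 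A family of pairwise non-crossing, vertex-disjoint triangles in convex position is a set of triangles whose convex hulls are pairwise disjoint and "nested-free", so they can be linearly ordered around the circle; such a family of $m$ triangles uses $3m$ distinct vertices arranged in $m$ consecutive blocks, giving $m \le \lfloor n/3\rfloor$ trivially. That only yields $\alpha \le n/3$, hence $\chi \ge n^2/2 \cdot (1/n) $-type bounds that are far too weak — no, wait, $n/3$ as the independence number would give $\chi \ge (n^2/6)/(n/3) = n/2$, which is linear, not quadratic. So the trivial bound on $\alpha$ is useless; I have the reduction backwards in difficulty.

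The right approach is therefore the opposite pigeonhole: not bounding an independent set, but directly producing a large clique in $\mathcal{H}$, i.e.\ many pairwise intersecting triangles, echoing the proof of Theorem~\ref{theorem4}. Concretely I would partition the $n$ vertices of the convex polygon into a bounded number $m$ of consecutive arcs $A_1,\dots,A_m$ of nearly equal size $n/m$. For each pair of arcs $A_s, A_t$ and each "central" arc $A_r$, count the triangles of $P$ that have one vertex in each of three arcs placed so that any two such triangles are guaranteed to cross (this is the convex-position crossing condition: vertices interleaving around the circle). By averaging, some triple of arc-positions captures $\Omega(n^2)$ triangles of $P$ all sharing the interleaving pattern, hence pairwise crossing, hence needing distinct colors; optimizing $m$ and the arc-pattern to maximize the guaranteed count over $\binom{n}{2}/3$ triangles yields the denominator $119$. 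The main obstacle I anticipate is the combinatorial optimization that produces exactly $119$: one must choose $m$ and identify which triples of arc-slots force mutual crossing for every triangle whose vertices fall into those slots (not every triangle with one vertex in each of three fixed arcs crosses every other — the orientation matters), count how many triangles of the decomposition are "useful" in the worst case, and then argue via a direct counting (each edge lies in exactly one triangle, and edges between well-separated arcs are plentiful) that a $1/119$ fraction is unavoidable; getting the constant down to $119$ rather than something larger is where the delicate casework lives.
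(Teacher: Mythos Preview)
Your first instinct (the $|V|/\alpha$ reduction) is actually much closer to the paper's proof than your second one, and you abandon it too quickly. The paper does not look for a large clique of pairwise intersecting triangles; instead it refines the independence-number bound exactly where you gave up. The trick is to discard the ``short'' triangles first: call a triangle \emph{large} if its shortest edge has length at least $n/x$ (for a parameter $x$ to be optimized), and \emph{short} otherwise. There are at most $n^2/x$ short triangles, so at least $\binom{n}{2}/3 - n^2/x$ large ones. The heart of the argument is a geometric lemma: any chromatic class contains at most $x-2$ large triangles. This is proved by placing a point inside each large triangle of a putative class $L$, building a spanning tree $T$ on these points, and counting triangle edges not crossed by $T$; there are at least $|L|+2$ such edges, and each one cuts off at least $n/x-1$ points of $S$ that belong to no triangle of $L$, forcing $|L|\le x-2$. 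Optimizing $x=2(3+\sqrt{6})$ then gives $\chi'\ge n^2/(60+24\sqrt 6)-O(n)>n^2/119-O(n)$. So the step you declared ``useless'' becomes decisive once you restrict to large triangles, and it is precisely that restriction that you never consider.

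Your second route (partition the boundary into $m$ arcs and average over arc-triples) is a genuinely different idea and is not what the paper does. Ironically, your doubt about it is misplaced: if $A,B,C$ are three disjoint arcs in this cyclic order, then any two triangles each having one vertex in $A$, one in $B$, one in $C$ \emph{must} intersect (neither can sit inside a single arc of the other), so ``orientation'' is no obstacle. What is missing from your sketch is the actual counting: you never fix $m$, never bound the number of triangles with two vertices in one arc, and your assertion that the optimization ``yields the denominator $119$'' is unsupported --- a straightforward version of this averaging with $m=4$ would in fact give something closer to $n^2/96$, not $n^2/119$. As written, then, the second half of your proposal is a plausible but unexecuted plan, while the first half discards the very idea the paper makes work.
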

 
\begin{proof}
Recall that the length of an edge $e_{i,j}$ is $\min\{|j-i|, n-|j-i|\}$. We define the \emph{length of a triangle} of $P$ as the length of its shortest edge.
Let $x$ be a real number to be determined later, and such that $x\geq3$. A triangle is called {\it{large}} if its length is at least $\frac{n}{x}$, and it is called {\it{short}} if its length is at most $\frac{n}{x}$.
 The number of edges with length at most $\frac{n}{x}$ is at most $\frac{n^2}{x}$, and therefore also the number of short triangles is at most $\frac{n^2}{x}$. Then, the number of large triangles is at least ${{n}\choose{2}}/3 - \frac{n^2}{x}$, where ${{n}\choose{2}}/3$ is the number of elements of $P$.
 
We now show that each chromatic class contains at most $x-2$ large triangles. Let $L$ be the set of large triangles of a chromatic class. Assume, to the contrary, that $|L|>x-2.$ Let $U$ be the subset of points of $S$ that are not vertices of triangles in $L$. Then, $n=|S|=|U|+3|L|$. Next we define a directed tree $T$, associated to $L$. The vertex set of $T$ consist of $|L|$ points placed in the interior of the triangles of $L$. Choose one of these points as the root vertex, denoted  $v$,  of $T$. We say that a point $a$ is {\it{visible}} from a point $b$ if the straight-line segment connecting $a$ and $b$ intersects no triangles from $L$ other than the two triangles which contain $a$ and $b$, respectively. Connect $v$ to all vertices visible from $v$, and connect each descendent of $v$ to its visible and not yet visited vertices. Iterate this process until all vertices of $L$ are visited. See Figure~\ref{Fig5}. Then, count the number $k$ of triangle edges of $L$ which are not intersected by edges of $T$ (drawn as bold edges in Figure~\ref{Fig5}). The triangles of $L$ have altogether $3|L|$ edges, $T$ has $|L|-1$ edges, and each edge of $T$ intersects two triangle edges. Therefore, $k \geq 3|L| - 2(|L|-1) = |L|+2$. Since we only consider large triangles, each of these $k$ edges, denoted $e'$, leaves at least $\frac{n}{x}-1$ points of $S$ on one side of $e'$ and leaves all triangles of $L$ on the other side of $e'$. Hence, for each edge $e'$ we count at least $\frac{n}{x}-1$ points of $U$. It follows that $|U|$ is at least $(|L|+2)(\frac{n}{x}-1)$. The assumption  $|L|>x-2$ then implies $|U| > |S|-(|L|+2)$. We get $|S|-3|L| = |U| > |S|-(|L|+2)$, which gives $|L|<1$, a contradiction.
 
We thus have the lower bound $$\chi'([\mathsf{K}^\mathrm{c}_n,P])\geq \frac{{{n}\choose{2}}/3 - \frac{n^2}{x}}{x-2}.$$
 Choosing $x=2(3+\sqrt{6})$ we obtain the claimed bound $$\chi'([\mathsf{K}^\mathrm{c}_n,P])\geq \frac{n^2}{60+24\sqrt{6}}-O(n) > \frac{n^2}{119}-O(n).$$ 
\begin{figure}
\begin{center}
\includegraphics[scale=0.8]{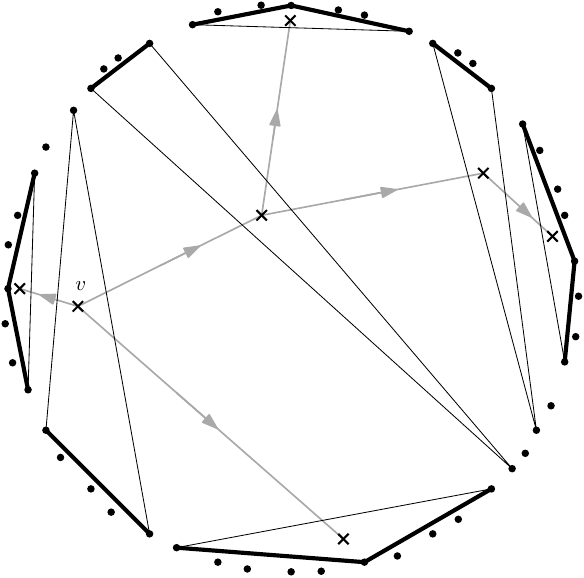}
\caption{The set $L$ of large triangles of a chromatic class and the associated directed tree $T$. Triangle edges not intersected by $T$ are drawn in bold.}\label{Fig5}
\end{center}
\end{figure}
\end{proof}

%%%%%%%%%%%%%%%%%%%%%%%%%%%%%%%%%%%%%%%%%%%%%%%%%%%%%%%%%%%%%%%%%%%%%%%%%%%%%%%%%%%%%%%%%%%%%%%%%%%%%%%%%%%%

\section{The EFL conjecture for $\mathsf{K}_n$}\label{section4}

All the previous results support the following conjecture, that we called \emph{EFL conjecture for complete geometric graphs}:

\begin{conjecture}
Let $[\mathsf{K}_n,P]$ be a decomposition of $\mathsf{K}_n$, then \[\chi'([\mathsf{K}_n,P])\leq \frac{n^{2}}{9}+\Theta(n).\]
\end{conjecture}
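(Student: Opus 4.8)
The plan is a divide-and-conquer argument built on the region decomposition in the proof of Theorem~\ref{theorem5}, after isolating the triangle-rich decompositions that form the well-known hard core of Erd\H{o}s--Faber--Lov\'asz type problems. Write $P=P_{\ge4}\cup P_{\Delta}\cup P_1$, where $P_{\ge4}$ consists of the elements with at least four vertices, $P_{\Delta}$ of the triangles, and $P_1$ of the single edges, and set $t=|P_{\Delta}|$. The edges of $P_1$ induce a geometric subgraph of $\mathsf{K}_n$, hence can be colored with $\chi'(\mathsf{K}_n)=O(n^{3/2})$ colors by \cite{MR2155418}, and each element of $P_{\ge4}$ has at least six edges, so $|P_{\ge4}|\le\frac16\bigl(\binom{n}{2}-3t\bigr)$. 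Up to lower-order terms one is thus reduced to coloring $P_{\ge4}\cup P_{\Delta}$ with at most $\frac{n^2}{9}$ colors. Using disjoint palettes for the two parts is too wasteful, as Theorem~\ref{theorem4} already exhibits about $\frac{n^2}{9}$ pairwise-intersecting edge-disjoint triangles, so that when $t$ is of that size the triangles alone exhaust the budget; hence the larger elements and the triangles must be colored jointly, each element of $P_{\ge4}$ reusing the color of some triangle it does not intersect. A first step is therefore a lemma asserting that appending larger complete subgraphs to a triangle decomposition cannot force the chromatic index above $\frac{n^2}{9}+\Theta(n)$.

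The heart of the argument treats the triangle part. Following Theorem~\ref{theorem5}, apply Corollary~\ref{coro} after an affine transformation to split the point set by three lines, two of them parallel, into six parts $S_1,\dots,S_6$ of almost equal size, and set $T_1=S_1\cup S_4$, $T_2=S_2\cup S_5$, $T_3=S_3\cup S_6$. Classify each triangle of $P$ by how its three vertices distribute among $T_1,T_2,T_3$. Triangles contained in a single $T_\ell$ are colored recursively; since the two parallel lines separate the three ``columns'' $T_1,T_2,T_3$, no edge lying in one column crosses an edge lying in another, so the colors of the recursion can be reused across the three columns, contributing a single term $T(n/3)$ to the recurrence, exactly as in Theorem~\ref{theorem5}. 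For the triangles meeting at least two of the $T_\ell$ one tries to bundle many into each color class, exploiting that the line configuration forces suitable pairs of cross-region triangles to be non-crossing and vertex-disjoint. If the worst case still yielded a recurrence of the form $T(n)\le 8(n/9)^2+T(n/3)$, or any recurrence solving to $\frac{n^2}{9}+O(n)$, the conjecture would follow.

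The main obstacle is precisely this bundling of cross-region triangles. In Theorem~\ref{theorem5} the triangles were \emph{chosen} so as to fall into four pairwise non-crossing classes --- those of the twelve-triangle decomposition of $\mathsf{K}_9$ --- which is why $8$ colors suffice per copy of $\mathsf{K}_9$; for an \emph{arbitrary} decomposition there is no such structure, and an adversary can make almost every two cross-region triangles incompatible, as Theorem~\ref{theorem4} does with $\frac{n^2}{9}-O(n)$ triangles. What is really needed is an extremal statement with essentially no slack: in every complete geometric graph on $n$ points, a pairwise-intersecting family of pairwise edge-disjoint triangles has at most $\frac{n^2}{9}+\Theta(n)$ members, and, more strongly, the conflict graph of any edge-disjoint triangle family has chromatic index at most $\frac{n^2}{9}+\Theta(n)$ --- a geometric Erd\H{o}s--Faber--Lov\'asz theorem for triangle systems. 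Even the clique-number version appears to require a global charging argument relating the crossings and shared vertices of the triangles to the six-region partition, and seems open beyond the convex case. Natural intermediate targets, left for future work, are to settle the conjecture for decompositions into triangles and for point sets in convex position, where the cyclic difference-triple machinery of Theorems~\ref{theorem3.2} and~\ref{theorem3.3} gives more to work with.
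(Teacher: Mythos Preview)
The statement you are attempting to prove is a \emph{conjecture} in the paper, not a theorem: the paper offers no proof of it, only supporting evidence (Theorems~\ref{theorem4} and~\ref{theorem5}) and weaker or related formulations (the clique-index conjecture and the convex-hull variant). There is therefore no proof in the paper to compare your proposal against.

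Your proposal is also not a proof, and you say so yourself. The outline correctly isolates the difficulty: the single-edge part is $O(n^{3/2})$, the $P_{\ge4}$ part is at most $\binom{n}{2}/6$, and the recursion on the three columns $T_1,T_2,T_3$ mirrors Theorem~\ref{theorem5}. But the decisive step---coloring the cross-region triangles of an \emph{arbitrary} decomposition with $8(n/9)^2$ colors---is exactly where your argument stops. In Theorem~\ref{theorem5} that step works only because the triangles are \emph{constructed} to align with the $STS(9)$ structure; for a general $P$ no such alignment exists, and Theorem~\ref{theorem4} shows the adversary can produce $n^2/9-O(n)$ pairwise-intersecting edge-disjoint triangles already in convex position. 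The ``extremal statement with essentially no slack'' that you say is needed---that any pairwise-intersecting family of edge-disjoint triangles in $\mathsf{K}_n$ has at most $n^2/9+\Theta(n)$ members---is precisely the paper's own weaker Conjecture on $\omega'([\mathsf{K}_n,P])$, which is also left open. So your plan reduces the conjecture to another conjecture of the same paper, which is a reasonable observation but not a proof.
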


Additionally, we want to note some remarks.

First, we mainly use a set of geometric complete subgraphs $W$ of $P$ which are pairwise intersecting  to give a lower bound for $\chi([\mathsf{K}_n,P])$. We denote as 
$\omega'([\mathsf{K}_n,P])$ the maximum number $k$ of elements of any $W$ and call it the \emph{clique index} of $[\mathsf{K}_n,P]$. Since $\omega'([\mathsf{K}_n,P])\leq\chi'([\mathsf{K}_n,P]$, a weaker conjecture is the following.

\begin{conjecture}
Let $[\mathsf{K}_n,P]$ be a decomposition of $\mathsf{K}_n$, then \[\omega'([\mathsf{K}_n,P])\leq \frac{n^{2}}{9}+\Theta(n).\]
\end{conjecture}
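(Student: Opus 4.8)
The plan is to bound $|W|$ for an arbitrary pairwise-intersecting subfamily $W\subseteq P$, which we may take to be inclusion-maximal. First I would sort the members of $W$ by their number of vertices into the single edges $W_2$, the triangles $W_3$, and the cliques on at least four vertices $W_{\geq 4}$. Edge-disjointness of the members gives $|W_2|+3|W_3|+6|W_{\geq 4}|\le\binom{n}{2}$, which by itself yields only the weak estimate $|W|\le\binom{n}{2}/3+|W_2|$. To dispose of $W_2$, recall that a geometric graph on $n$ vertices in which every two edges meet (share an endpoint or cross) has $O(n)$ edges; applied to $W_2$ this gives $|W_2|=O(n)$, so the single edges only affect the $\Theta(n)$ error term. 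Everything therefore comes down to replacing the constant $\tfrac13$ by $\tfrac19$ for the triangles, and this cannot be done by edge-counting alone --- it must genuinely use the intersection hypothesis among the triangles.

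The heart of the argument would be the claim that \emph{a family $\mathcal T$ of pairwise-intersecting, edge-disjoint triangles of $\mathsf{K}_n$ has at most $\tfrac{n^2}{9}+O(n)$ members.} To attack it I would introduce the same kind of partition of the point set that drives the extremal constructions of Theorems~\ref{theorem5} and \ref{theorem4}: by Corollary~\ref{coro} pick three lines, two of them parallel, splitting $S$ into six parts of size $\tfrac n6+O(1)$, and group them into three classes $U_1,U_2,U_3$ of size $\tfrac n3+O(1)$. Call a triangle \emph{transversal} if it has exactly one vertex in each $U_i$. The transversal triangles of $\mathcal T$ are edge-disjoint and each consumes three of the $3(n/3)^2=\tfrac{n^2}{3}$ edges running between the classes, so there are at most $\tfrac{n^2}{9}$ of them; the claim would then follow from showing that all but $O(n)$ members of $\mathcal T$ are transversal. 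For this I would delete the $O(n)$ triangles of $\mathcal T$ meeting a fixed triangle $T_0$ in a vertex, so that every remaining triangle crosses $T_0$, and then argue that a triangle with two vertices in the same class $U_i$ has its short side confined to a wedge bounded by the chosen lines; too many such triangles would produce many pairwise-crossing, vertex-disjoint triangles all crossing a fixed one, and I would bound those by the directed-tree counting used in the proof of Theorem~\ref{theorem3.3} (and in the spirit of Observation~\ref{lemma1}, where many pairwise-intersecting triangles are forced to surround a common point).

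The main obstacle --- and the reason this is still a conjecture --- is exactly the structural step that almost every triangle of $\mathcal T$ must be transversal for some balanced tripartition: pairwise intersection forces a great deal of global interlocking, but it does not obviously forbid a positive fraction of triangles from placing two vertices in a single class, and the naive counting and tree arguments seem to yield only a weaker bound of the form $\tfrac{n^2}{c}$ with $c$ somewhat below $9$, not exactly $\tfrac19$. There is also a secondary difficulty in the accounting for $W_{\geq 4}$: the edge budget alone does not make $|W_{\geq 4}|$ small compared with $\tfrac{n^2}{9}$, so one must either strengthen the triangle bound to a weighted version in which a $K_t$ is charged at least $t-1$, or replace cliques on $\geq 4$ vertices by triangles through a local exchange that preserves pairwise intersection. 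Once the transversality step is in place, the remaining bookkeeping should be routine.
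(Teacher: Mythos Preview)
The statement you are attempting is stated in the paper as a \emph{conjecture}, not a theorem; the paper offers no proof whatsoever of it and presents it explicitly as open. There is therefore no ``paper's own proof'' to compare your proposal against.

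Your writeup is not a proof either, and you say so yourself: the transversality step---that all but $O(n)$ triangles in a pairwise-intersecting, edge-disjoint family must be transversal to some balanced tripartition---is precisely the missing idea, and nothing in the paper supplies it. The directed-tree argument from Theorem~\ref{theorem3.3} only yields a constant well below $9$ (indeed $60+24\sqrt{6}\approx 119$ in the convex setting), and the paper gives no mechanism for pushing it to $9$. Your secondary difficulty with $W_{\geq 4}$ is also real and unaddressed in the paper. In short, both you and the authors agree that this bound is open; your outline identifies the right bottleneck but does not resolve it, and neither does the paper.
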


Second, consider a decomposition $[\mathsf{K}_n, P]$ of $\mathsf{K}_n$, and take the convex hull of each element in $P$, denote this new decomposition as  $[\mathsf{K}_n,\overline{P}]$. Under this new definition we say that two elements of $P$ intersect if their convex hulls intersect. Since $\chi'([\mathsf{K}_n,P]\leq \chi'([\mathsf{K}_n,\overline{P}])$, a stronger conjecture is the following.

\begin{conjecture}\label{conjecture3}
Let $[\mathsf{K}_n,\overline{P}]$ be a decomposition of $\mathsf{K}_n$, then \[\chi'([\mathsf{K}_n,\overline{P}])\leq \frac{n^{2}}{9}+\Theta(n).\]
\end{conjecture}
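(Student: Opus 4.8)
The plan is to attack Conjecture~\ref{conjecture3} by a recursive geometric decomposition of the point set $S$, in the spirit of the proof of Theorem~\ref{theorem5} but applied to an \emph{arbitrary} decomposition $[\mathsf{K}_n,\overline{P}]$ rather than a purpose-built one. First I would invoke Corollary~\ref{coro} (after an affine transformation) to obtain two parallel vertical lines and one horizontal line that split $S$ into six cells, and group the two cells of each vertical strip into three ``columns'' $T_1,T_2,T_3$, each of size $\tfrac{n}{3}+O(1)$. Since the convex hull of a point subset contained in one vertical strip stays inside that strip, every block of $\overline{P}$ is either \emph{local}, its convex hull lying inside a single column, or \emph{crossing}, having vertices in at least two columns (so that its hull meets at least two strips). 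Local blocks confined to distinct columns have disjoint convex hulls, hence the three sub-decompositions induced inside $T_1$, $T_2$, $T_3$ can be coloured \emph{in parallel} from one common palette; together with the known bound $\chi'(\mathsf{K}_m)\le c\,m^{3/2}$ applied to the blocks that are single edges, this yields a recursion $T(n)\le C(n)+T(n/3)+O(n^{3/2})$, where $C(n)$ counts the colours spent on the crossing blocks. Solving it, the task reduces to colouring all crossing blocks with at most $\tfrac{8}{81}n^{2}+O(n)$ colours; the bound produced this way would then be $\tfrac{n^{2}}{9}+O(n^{3/2})$, and sharpening the error term to the conjectured $\Theta(n)$ would require in addition the still-open estimate $\chi'(\mathsf{K}_m)=O(m)$ for the edge-blocks.

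The real content is the colouring of the crossing blocks, and the naive estimates are not enough: a crossing block that is not a single edge uses at least two of the roughly $n^{2}/3$ inter-column edges, so there are at most $n^{2}/6$ of them, but this only reproduces Proposition~\ref{proposition1}, and indeed the recursion on its own never beats $\tfrac{n^{2}}{6}$. The improvement to $\tfrac{n^{2}}{9}$ must come from \emph{merging} colours of crossing blocks whose convex hulls are disjoint: concretely one needs the convex-hull intersection graph of the crossing blocks to have chromatic number at most $\tfrac{16}{27}$ times its number of vertices, for which it would already suffice to exhibit a matching of size $\Theta(n^{2})$ in the complement of that graph, that is, $\Theta(n^{2})$ disjoint pairs of crossing blocks with disjoint hulls. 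I would try to produce such a family by classifying the crossing blocks according to which pair (or triple) of strips their hulls occupy, and inside each class extracting a large sub-family of pairwise non-crossing blocks that can be batched into few colour classes --- the role played in Theorem~\ref{theorem5} by the non-crossing triangle classes of a $\mathsf{K}_9$ together with the transversal-design labelling.

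The main obstacle is that an adversarial decomposition need not provide any such structure. In the worst case --- for instance a decomposition into triangles with $S$ nearly in convex position, where Theorem~\ref{theorem4} already exhibits instances needing $\tfrac{n^{2}}{9}-O(n)$ colours (and Theorem~\ref{theorem3.3} shows $\Omega(n^{2})$ is unavoidable even for the best-structured triangle decompositions) --- essentially every triangle is ``long'' and is cut by any partition of $S$ into a constant number of regions, so the target bound is tight and no colour can be wasted; controlling the pairwise-crossing pattern of a quadratic family of long triangles then seems to demand genuinely new combinatorial-geometry input: a Helly- or $(p,q)$-type bound on how many pairwise-intersecting convex polygons arising from a single decomposition can cluster around a point, or a charging argument relating the number of colours to the number of edge crossings. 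I expect the convex-hull hypothesis of Conjecture~\ref{conjecture3} to be genuinely helpful here, because convexity is exactly what makes separation and Helly-type arguments available, and I would first try to settle the weaker clique-index version $\omega'([\mathsf{K}_n,\overline{P}])\le\tfrac{n^{2}}{9}+\Theta(n)$, for which a piercing-number estimate for pairwise-intersecting convex polygons might already suffice; upgrading such an estimate from cliques to colourings would be the last gap toward the full statement.
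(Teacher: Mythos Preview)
This statement is a \emph{conjecture} in the paper, not a theorem: the authors do not prove it, they only motivate it by the results of Sections~\ref{section2} and~\ref{section3} and by the bound $\tau(\mathsf{K}_n,p)\le \tfrac{n^2}{9}+\Theta(n)$ from~\cite{MR3205151}. So there is no paper proof to compare against, and your proposal should be read as a research programme rather than a proof --- which, to your credit, is exactly how you present it.

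Your recursive framework is sound in outline and the arithmetic $\sum_{i\ge 0}\tfrac{8}{81}(n/3^i)^2=\tfrac{n^2}{9}$ is correct, but the step where the argument actually rests --- colouring the crossing blocks with at most $\tfrac{8}{81}n^2+O(n)$ colours --- is precisely the content of the conjecture, and you have not supplied it. The naive count you give, that a non-edge crossing block consumes at least two inter-column edges, yields only $C(n)\le n^2/6$, and you correctly observe this just reproduces Proposition~\ref{proposition1} through the recursion. The proposed fix, extracting a $\Theta(n^2)$-size matching of hull-disjoint crossing blocks, has no proof sketch beyond an analogy with the \emph{constructed} decomposition of Theorem~\ref{theorem5}; for an adversarial $P$ there is no reason the crossing blocks should admit any such matching, and your own invocation of Theorem~\ref{theorem4} shows that in the tight instances essentially every pair of crossing blocks intersects. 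There is also a smaller gap in the recursion itself: the local blocks inside a column $T_i$ need not form a decomposition of $\mathsf{K}_{|T_i|}$, since a crossing block can contribute intra-column edges, so ``recurse on $T_i$'' is not literally applying the inductive hypothesis to a smaller instance of the same problem.

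In short: you have faithfully identified where the difficulty lies and proposed reasonable tools (Helly-type or $(p,q)$ theorems, the clique-index relaxation), but nothing here constitutes progress on the conjecture beyond what the paper already records as open.
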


If we consider the clique index of $\chi'([\mathsf{K}_n,\overline{P}])$, we have that \[\omega'([\mathsf{K}_{n},P])\leq\begin{array}{c}
\chi'([\mathsf{K}_{n},P])\\
\omega'([\mathsf{K}_{n},\overline{P}])
\end{array}\leq\chi'([\mathsf{K}_{n},\overline{P}]).\]

Therefore, a non-comparative conjecture is the following.

\begin{conjecture}\label{conjecture4}
Let $[\mathsf{K}_n,P]$ be a decomposition of $\mathsf{K}_n$, then \[\omega'([\mathsf{K}_n,\overline{P}])\leq \frac{n^{2}}{9}+\Theta(n).\]
\end{conjecture}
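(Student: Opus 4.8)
The plan is to bound the clique index directly, that is, to estimate the size of a maximum pairwise–intersecting subfamily $W=\{\mathsf{H}_1,\dots,\mathsf{H}_m\}$ of $\overline P$. Two preliminary reductions come first. Since $P$ is a decomposition, its blocks are edge–disjoint, so the vertex sets $V(\mathsf{H}_i)$ pairwise meet in at most one point; in particular they form a linear hypergraph on the $n$ vertices. Moreover, a pairwise–intersecting family of straight–line segments with all endpoints in a fixed $n$–point set has only $O(n)$ members: by one–dimensional Helly applied to the $x$–projections all such segments cross a common vertical line, and the ``left neighbourhoods'' of their right endpoints are then forced to be almost disjoint intervals, which gives the bound. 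Consequently the blocks of $W$ with at most two vertices contribute only $O(n)$, and we may assume every $\mathsf{H}_i$ has at least three vertices, so each $\overline{\mathsf{H}_i}$ is a genuine convex polygon.

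Next I would run a projection/Helly reduction. List the points of $S$ as $p_1,\dots,p_n$ by increasing $x$–coordinate and give each block $\mathsf{H}_i$ the span $[\ell_i,r_i]$ of the indices of its leftmost and rightmost vertices. If two blocks had disjoint spans they would sit in disjoint vertical slabs and hence be disjoint apart from a possible common boundary vertex, contradicting $\overline{\mathsf{H}_i}\cap\overline{\mathsf{H}_j}\neq\emptyset$; so the spans pairwise overlap and, by Helly in dimension one, there is a vertical line splitting $S$ into $L$ and $R$ so that every block has a vertex in each part. As $W$ is edge–disjoint, each block contains at least one edge (and all of them distinct) of the complete bipartite graph between $L$ and $R$, whence $m\le |L|\,|R|+O(n)\le n^{2}/4+O(n)$. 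Repeating with the $y$–coordinate gives a second, horizontal cut with the same straddling property. This already proves the qualitative part of the conjecture, $\omega'([\mathsf{K}_n,\overline P])=O(n^{2})$.

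To obtain the sharp constant $\tfrac19$ I would show that the extremal configurations are the tripartite transversal systems of Theorem~\ref{theorem4}: using a deep point as in Corollary~\ref{coro} and Theorem~\ref{thm:buck}, $S$ splits into three clusters of size about $n/3$, and if every block is a thin triangle with one vertex in each cluster, then the family is a partial triangle decomposition of $K_{n/3,n/3,n/3}$, so it has at most $(n/3)^{2}=n^{2}/9$ blocks, leaving the three within–cluster cliques (altogether $3\binom{n/3}{2}$ edges) unused. Thus the target bound is equivalent to the statement that in any pairwise–intersecting edge–disjoint family of blocks at least $\tfrac13\binom n2-o(n^{2})$ edges of $\mathsf{K}_n$ remain uncovered. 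I would attack this by a stability argument: refine the $L/R$ and $B/T$ cuts above into three cuts in directions $120^{\circ}$ apart, show that a family of size close to $n^{2}/9$ forces the three straddled regions to be nearly balanced clusters, and then argue that a block using a within–cluster edge either can be discarded at bounded cost or destroys pairwise intersection with one of the transversal blocks; an equivalent route is a weighted count in which within–cluster (short) edges are charged less than a uniform $\tfrac13$, with weights tuned so that the block–to–edge charging becomes tight exactly on the transversal construction.

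The main obstacle is precisely this last step — pinning the constant. Helly–type machinery produces a point common to many blocks only when the subfamily already has $\gg n$ members, and even then it need not meet a constant fraction of them (there are $\Theta(n)$ pairwise–crossing segments with no three concurrent), so the passage from $n^{2}/4$ to $n^{2}/9$ cannot come from a pure piercing result; it must exploit the interplay of convexity, the linear–hypergraph condition and planarity, and it amounts to proving that the transversal construction of Theorem~\ref{theorem4} is optimal. By contrast the blocks with at least four vertices should be harmless, since each uses at least six edges and therefore at least as many straddling edges as two triangles, making it strictly less efficient for maximizing $m$.
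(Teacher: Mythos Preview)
The statement you are attempting to prove is labelled \emph{Conjecture} in the paper, and the paper offers no proof of it; it only records the supporting evidence that $\tau(\mathsf{K}_n,p)\le n^{2}/9+\Theta(n)$ from \cite{MR3205151}, which bounds a quantity below $\omega'([\mathsf{K}_n,\overline P])$, not above it. So there is nothing to compare your argument against: the inequality is open.

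Your write-up is honest about this --- you explicitly call the passage from $n^{2}/4$ to $n^{2}/9$ ``the main obstacle'' and only sketch a stability/charging programme without carrying it out --- so what you have submitted is a plan, not a proof. Two remarks on the parts you do claim. First, the sentence ``This already proves the qualitative part of the conjecture, $\omega'([\mathsf{K}_n,\overline P])=O(n^{2})$'' is vacuous: any $W\subseteq P$ has $|W|\le |P|\le\binom{n}{2}$ simply because $P$ partitions the edge set, so no Helly argument is needed for $O(n^{2})$. Second, the one-dimensional Helly step is slightly loose as written: the common point of the index intervals $[\ell_i,r_i]$ may be an integer $k$ with $\ell_i=k$ for some blocks, in which case the vertical line ``just left of $p_k$'' leaves those blocks entirely on one side; this is repairable (such blocks all pass through $p_k$ and contribute $O(n)$), but it should be said.

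The substantive gap is exactly the one you identify: nothing in your outline forces three balanced clusters, and the heuristic ``blocks with at least four vertices are strictly less efficient'' is unjustified --- a $\mathsf{K}_4$ uses six edges but can straddle both cuts while contributing only one or two cross edges, so the charging you propose does not obviously penalise it enough. Until that step is made rigorous the bound you actually establish is $n^{2}/4+O(n)$, not $n^{2}/9+\Theta(n)$, and the conjecture remains open.
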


Conjectures \ref{conjecture3} and \ref{conjecture4}, let $\tau(\mathsf{K}_n,p)$ be the largest number of edge-disjoint triangles in any partition $P$ containing some fixed point $p$ such that $p$ is not a vertex of $\mathsf{K}_n$. Hence $\tau(\mathsf{K}_n,p)\leq\omega'([\mathsf{K}_n,\overline{P}])$. In \cite{MR3205151} was proved that, for any point $p$, \[\tau(\mathsf{K}_n,p)\leq \frac{n^{2}}{9}+\Theta(n).\]

Also, it is possible to deduce the following equation from \cite{MR3205151} getting a similar result to Theorem \ref{theorem3}: For any natural number $n$ there exists a decomposition $[\mathsf{K}_n,P]$ of $\mathsf{K}_n$ and some point $p$ such that \[\frac{n^{2}}{12}-\Theta(n) \leq \tau(\mathsf{K}_n,p) \leq \omega'([\mathsf{K}_n,\overline{P}]) \leq \chi'([\mathsf{K}_n,\overline{P}]).\]

Several related coloring problems for triangles have been studied, see \cite{MR3030320}.
Finally, we propose the following problem.

\begin{problem}
Let $[\mathsf{K}_n,P]$ be a decomposition of $\mathsf{K}_n$. How many triangles must contain $P$, such that $P$ contains two disjoint triangles?
\end{problem}

If we change the question using ``edges'' instead of ``triangles'', the answer is $n+1$ and it was given by Erd{\H o}s  \cite{MR0015796}. We conjecture the following due to the fact that in Theorem \ref{theorem4} it is possible to give a decomposition $[\mathsf{K}^\mathrm{c}_n,P]$ of $\mathsf{K}^\mathrm{c}_n$ with $(\frac{n}{3})^{2}+\epsilon$ pairwise intersecting triangles, where $\epsilon=1$ if $3|n$.

\begin{conjecture}\label{conjecture5}
Let $[\mathsf{K}_n,P]$ be a decomposition of $\mathsf{K}_n$. If $P$ contains at least $(\frac{n}{3})^{2}+2$ triangles then $P$ contains at least two disjoint triangles.
\end{conjecture}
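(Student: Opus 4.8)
The plan is to prove the contrapositive via an extremal/Ramsey-type argument on the ``intersection graph'' of triangles. Given a decomposition $[\mathsf{K}_n,P]$ with at least $\left(\frac{n}{3}\right)^2+2$ triangles, suppose no two of them are disjoint, i.e. every pair of triangles in $P$ shares a vertex or has a pair of crossing edges. I would first separate the triangles into two types according to the argument used for Theorem~\ref{theorem3.3}: those whose three vertices are ``spread out'' and those that are ``local'' (contained in a small arc, in the convex case) or, in the general position case, those containing a fixed center point $p$ obtained by applying Theorem~\ref{thm:buck}. The hope is that a family of pairwise intersecting triangles is forced to be highly structured, and the structure caps its size at roughly $\left(\frac{n}{3}\right)^2$.

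Concretely, I would try the following. Fix three concurrent lines through a point $p$ splitting $\mathsf{K}_n$ into six parts of size $\approx n/6$ (Theorem~\ref{thm:buck}). Call a triangle \emph{central} if $p$ lies inside it; by Observation~\ref{lemma1}-type reasoning a central triangle must pick one vertex from each of two ``opposite'' cones, so there are at most $3\left(\frac{n}{6}\right)^2+O(n)$ slots, but more importantly any two central triangles automatically intersect, and conversely a large pairwise-intersecting family should be ``essentially central.'' The key combinatorial step is: if $\mathcal{F}$ is a family of pairwise intersecting edge-disjoint triangles in $\mathsf{K}_n$ and $|\mathcal{F}|>\left(\frac{n}{3}\right)^2+1$, then $\mathcal{F}$ cannot avoid a disjoint pair — one shows that a triangle $t$ can intersect at most $\approx\left(\frac{n}{3}\right)^2$ other edge-disjoint triangles without some two of them being disjoint, because the triangles meeting $t$ at a vertex use up degree at that vertex while the triangles crossing $t$'s edges are confined to the two half-planes bounded by an edge, and a counting of available vertex/edge capacity gives the bound; then an averaging argument over the choice of $t$ finishes it. The number $\left(\frac{n}{3}\right)^2$ matches the construction in Theorem~\ref{theorem4}, so the bound is tight.

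The main obstacle I anticipate is making the ``a pairwise intersecting family is essentially central/structured'' step rigorous without losing a constant factor: crossing intersections are much harder to control than vertex intersections, and one can have long pairwise-crossing families of segments (this is exactly what makes $\chi'(\mathsf{K}_n)$ superlinear). So the delicate part is showing that once the family has more than $\left(\frac{n}{3}\right)^2+1$ members, the crossing structure becomes so dense that a disjoint pair is unavoidable; I would attempt this by a discharging argument on the arrangement of the triangles' edges, or by invoking a Helly-type / fractional Helly property for the family of ``triangle interiors plus incident edges'' to locate a common transversal point and then reduce to the central case handled above. If the tight constant proves elusive, a reasonable fallback is to prove the weaker statement that some function $f(n)=\Theta(n^2)$ triangles force a disjoint pair, which already matches the order of magnitude, and to record the sharp constant $\left(\frac{n}{3}\right)^2+2$ as Conjecture~\ref{conjecture5}.
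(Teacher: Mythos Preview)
The statement you are attempting to prove is labeled \textbf{Conjecture}~\ref{conjecture5} in the paper, and it is presented precisely as a conjecture: the paper offers no proof. The only support the authors give is the sentence immediately preceding it, namely that the construction behind Theorem~\ref{theorem4} yields $(n/3)^2+\epsilon$ pairwise intersecting edge-disjoint triangles in $\mathsf{K}^c_n$, so the bound $(n/3)^2+2$ would be tight if true. There is therefore nothing in the paper to compare your argument against.

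Your proposal is not a proof but a plan, and you already correctly locate the gap: the step ``a large pairwise-intersecting family of edge-disjoint triangles is essentially central'' is exactly the missing idea, and nothing in your outline supplies it. The counting you sketch --- that a single triangle $t$ can intersect at most about $(n/3)^2$ other edge-disjoint triangles without two of those being disjoint --- does not hold as stated: intersection with $t$ imposes no relation among the other triangles, so there is no reason the triangles meeting $t$ should themselves be pairwise intersecting or bounded in number by anything better than the trivial $\binom{n}{2}/3$. An averaging over the choice of $t$ therefore cannot close the argument. The Helly/fractional-Helly route you mention is also problematic, because ``nonempty intersection'' here mixes vertex-incidence with edge-crossing, and the resulting family of sets (triangle interiors together with their three vertices) is not convex and has no reason to satisfy a Helly property with the right constant. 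Your fallback --- prove a $\Theta(n^2)$ bound and leave the sharp constant conjectural --- is exactly what the paper does: Theorem~\ref{theorem3.3} already gives a quadratic lower bound for the convex case, and the sharp $(n/3)^2+2$ remains open.
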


\section*{Acknowledgments}

We thank an anonymous referee for helpful suggestions.

C. Huemer was supported by projects MINECO MTM2015-63791-R and Gen. \ Cat. \ DGR 2017SGR1336. C.Rubio-Montiel was partially supported by a CONACyT-M{\'e}xico Postdoctoral fellowship, by the National scholarship programme of the Slovak republic and PAIDI/007/19. 

%%%%%%%%%%%%%%%%%%%%%%%%%%%%%%%%%%%%%%%%%%%%%%%%%%%%%%%%%%%%%%%%%%%%%%%%%%%%%%%%%%%%%%%%%%%%%%%%%%%%%%%%%%%%

\bibliographystyle{amsplain}

\end{document}